\numberwithin{equation}{section}
\newcommand{\bi}{\begin{itemize}}
\newcommand{\ei}{\end{itemize}}
\newcommand{\ds}{\displaystyle}
\newcommand{\hs}{\,}
\newcommand{\hts}{\hspace*{0.1ex}}
\newcommand{\htts}{\hspace*{0.2ex}}
\newcommand{\htbs}{\hspace*{-0.1ex}}
\newcommand{\httbs}{\hspace*{-0.2ex}}
\newtheorem{lemma}{Lemma}
\newtheorem{theorem}{Theorem}
\newtheorem{conjecture}{Conjecture}
\newtheorem{corollary}{Corollary}
\newtheorem{remark}{Remark}
\newtheorem{algorithm}{Algorithm}
\newcommand{\lb}{\linebreak}
\newcommand{\fl}{\hts\mathsf{fl}}
\newcommand{\eps}{\varepsilon}
\newcommand{\cond}[1]{\mathrm{cond}(#1)}
\newcommand{\condd}[1]{\mathrm{cond}_2(#1)}
\newcommand{\mfl}[2]{#1\cdot\httbs10^{#2}}
\newcommand{\ufl}[2]{\underline{#1\cdot\htbs\htbs10^{#2}}}
\newcommand{\Matlab}{\emph{Matlab}}
\begin{document}

\title{{\Large\bf On recursive algorithms for inverting
  tridiagonal matrices} }

\author{Pawe\l\ Keller, Iwona\ Wr\'obel \\
 {\footnotesize Warsaw University of Technology, Faculty of
  Mathematics and Information Science} }

\date{}


\maketitle

\begin{abstract}
If $A$ is a tridiagonal matrix, then the equations $AX=I$ and $X\httbs A=I$
defining the inverse $X$ of $A$ are in fact the second order recurrence
relations for the elements in each row and column of $X$. Thus, the
recursive algorithms should be a natural and commonly used way for inverting
tridiagonal matrices -- but they are not. Even though a variety of such
algorithms were proposed so far, none of them can be applied to numerically
invert an arbitrary tridiagonal matrix. Moreover, some of the methods suffer a huge
instability problem. In this paper, we investigate these problems very
thoroughly. We locate and explain the different reasons the recursive
algorithms for inverting such matrices fail to deliver satisfactory
(or any) result, and then propose new formulae for the elements
of $X=A^{-1}$ that allow to construct the asymptotically fastest possible
algorithm for computing the inverse of an arbitrary tridiagonal matrix $A$,
for which both residual errors, $\|AX-I\|$ and $\|X\httbs A-I\|$, are always very small.
\end{abstract}

%

\section{Introduction} \label{SecIntro}
\setcounter{equation}{0}

Matrix inversion has quite numerous applications in statistics,
cryptography, computer graphics, etc. It is hard to imagine a
computation system or a scientific programming environment without
a library or a function that calculates the inverse of a matrix.

In many applications, where the inverse of a matrix appears, there
is no actual need for direct computation of the inverse, as the corresponding
problem can be solved by computing the solution of a matrix equation or,
in particular cases, a system of linear equations. There are
some problems, however, where the inverse of a matrix
is indeed required.

Even though the problem of computing matrix inverse has been extensively
studied and described in many numerical monographs, there is a
variety of new papers dealing with the subject. In the last
couple of years, a number of algorithms for inverting structured
matrices (block, banded) were proposed. This paper focuses on recursive
algorithms for inverting tridiagonal matrices.

The existing recursive algorithms for computing the inverse of this kind
of matrices are not very popular and are not commonly used. This is mostly
because there are wide classes of matrices for which these algorithms cannot
be applied due to their instability or other kind of limitations (it will be
described in more detail throughout the paper). In this paper, we analyse
the reasons of these disadvantages and search for the way to eliminate\lb them.
As a result, new formulae for the elements of the inverse of a tridiagonal
matrix $A$ are proposed, which allow a very fast and accurate computation
of $X=A^{-1}$, and can be applied to any non-singular tridiagonal matrix.
To our knowledge, it is the only method that guarantees, in general,
very small, both left and right, residual errors.

Let\hs $A \in\mathbb{R}^{n\times n}$\hs be a tridiagonal matrix,
i.e.\ a matrix whose elements satisfy $a_{ij} = 0$ if $|i-j|>1$
($i,j = 1,2,\dots,n$). The most common algorithms for evaluating $A^{-1}$
are based on solving the matrix equation $AX = I$ (or $X\httbs A=I$; throughout
the paper, by $X$, we shall denote the numerically computed inverse of a tridiagonal
matrix $A$, or the inverse which is to be computed, and by $I$, the identity matrix).
If the above equation is solved using, for example, Gaussian elimination with partial pivoting
or using orthogonal transformations (e.g., Givens rotations), the
computed inverse satisfies (c.f.\ \cite[\S\hts 6.12]{Datta10Book})
\begin{equation*}
  \| AX - I \htts\|\hs \leq\hs \eps\hts K\hts\cond{A} ,
\end{equation*}
where $\eps$ is the machine precision, $K$ is a small constant,
and $\mathrm{cond}(A)$ is the condition number of $A$. In such a case,
it is easy to verify that the relative error $\|A^{-1}\|^{-1}\|X-A^{-1}\|$
satisfies a similar bound:
\begin{equation*}
  \frac{\| X - A^{-1}\|}{\|A^{-1}\|} \leq\hs \| AX - I \htts\|\hs \leq\hs \eps\hts K\hts\cond{A} .
\end{equation*}
However, the bound is not so favourable in the case of the second residual error.
We have, as $X = A^{-1}AX = A^{-1}\big(I + AX - I\big)$,
\begin{align*}
  \| X\httbs A - I \htts\|
    & =\hs \| A^{-1}\big(I + AX - I\big) A - I\hts\| \\[0.5ex]
    & =\hs \| A^{-1}\big(AX - I\big) A \|\hs \leq\hs
           \cond{A} \| AX - I\hts\| \hs \leq\hs \eps\hts K\hts\cond{A}^2 .
\end{align*}
Indeed, consider the following tridiagonal matrix $A\in\mathbb{R}^{10\times10}$
whose elements $a_{ij}$ for $i,j \in\{1,2,\dots,10\htts\}$, $|i-j|\leq 1$ are listed column-wise
in the following table:
\begin{equation*}
\begin{bmatrix}
          &    0   &  1/83  &  1/70  &  1/65  &  1/49  &   16   &   49   &   57   &  70\hs \\
       1  &  1/98  &  1/84  &  1/53  &   92   &   55   &   86   &  1/84  &  1/49  &  83\hs \\
\htts 79  &   61   &   18   &    3   &  1/32  &  1/37  &  1/45  &    0   &    0   &
\end{bmatrix}\hts.
\end{equation*}
The condition number $\condd{A} \simeq \mfl{6.1}{8}$.
If the inverse $X$ is computed in the double precision arithmetic
($\eps=2^{-52}\simeq\mfl{2.2}{-16}$) using the \Matlab\ command \texttt{X\,=\,\hs A\textbackslash eye(10)},
i.e.\ using the Gaussian elimination with partial pivoting, then we will obtain the result
that satisfies
\begin{equation*}
 \| AX - I\hts\|_2\hs \simeq\hs \mfl{5.8}{-9}\hs \simeq\hs \eps\hts 0.043\hs\condd{A} .
\end{equation*}
For the left residual error, however, we have
\begin{equation*}
 \| X\httbs A - I\hts\|_2\hs \simeq\hs \textbf{3.\hts4}\hs \simeq\hs \mfl{5.8}{8}\htts \| AX - I\hts\|_2\hs \simeq\hs
 0.95\htts \condd{A} \| AX - I\hts\|_2,
\end{equation*}
and there are elements larger than $3$ outside the main diagonal in the matrix $X\httbs A$.
Such an inverse may not always be considered as a very satisfactory one. This problem has been already
considered by Higham in \cite{DuCroz92} and \cite[\S\hts 13.3]{Higham02Book}, who proposed a symmetric
algorithm for computing $A^{-1}$ based on $LU$ factorisation. However, as the algorithm
uses no pivoting strategy, it is not stable in general.

The asymptotic complexity of the most common algorithms for inverting
tridiagonal matrices equals: $3n^2+O(n)$ in the case of Gaussian elimination
without pivoting, $5n^2+O(n)$ in the case of Gaussian elimination with partial
pivoting, and $7.5n^2+O(n)$ in the case of the algorithm based on Givens rotations.
In the present paper, we will propose some new formulae which allow to construct
a stable, always applicable algorithm with the smallest possible asymptotic complexity:\
$n^2+O(n)$. The authors do not know the formal proof of stability of the algorithm yet,
but strong justification is given to support the conjecture that the inverse $X$
of a tridiagonal matrix $A\in\mathbb{R}^{n\times n}$ computed
using the proposed method satisfies
\begin{equation*}
  \max\big\{ \| AX - I\hts\|, \| X\httbs A - I\hts\| \big\}\hs \leq\hs \eps\hts K(n)\hts \cond{A} ,
\end{equation*}
where $K(n) = O(n)$.

In the incoming section, we shall present a short review of the recursive approach
to the problem of inverting a tridiagonal matrix. We shall also recall the basic
facts from the theory of difference equations that will help to explain the reasons
several recently proposed recursive algorithms for inverting tridiagonal matrices
fail (or are unstable) for some important classes of matrices.

In Section \ref{SecNewAlg}, we present a new efficient method, also based on some
recursions, which can be successfully applied to invert any non-singular tridiagonal matrix.

\section{A short review of the recursive algorithms and the theory
of the second order difference equations} \label{SecRec}
\setcounter{equation}{0}

In this section, we review several algorithms for recursive computation of
the inverse of a tridiagonal matrix. Showing the strong and the weak sides of
these algorithms will lead us to the main result of the paper.

\subsection{The naive recursion} \label{SubSecNaive}

It is well known that for\hs $B \in\mathbb{R}^{n\times n}$\hs and\,
$c \in\mathbb{R}^n$\hs the product\hs $Bc$ can be interpreted as the
linear combination of columns of the matrix $B$,
\begin{equation*}
  Bc = \sum\limits_{k=1}^n c_k B_k\,,
\end{equation*}
where $B_k$ denotes the $k$-th column of $B$. Such interpretation is very
convenient if $c$ has only a few non-zero elements.

Let $A \in\mathbb{R}^{n\times n}$\hs be a tridiagonal matrix
($a_{ij} = 0$ if $|i-j|>1$). Then, the equation $X\httbs A = I$
implies the following relations:
\begin{align*}
  a_{k-1,k} X_{k-1} + a_{k,k} X_k +
  a_{k+1,k} X_{k+1} & = I_k \quad (1 < k < n)\,, \\[0.875ex]
  a_{n-1,n} X_{n-1} + a_{n,n} X_n & = I_n\,.
\end{align*}
Thus, if we know the last column $X_n$ of $X$, then we can easily
recursively compute the whole inverse matrix
$X$:\vspace{-0.25ex}
\begin{equation}
\renewcommand{\arraystretch}{2}
\left.\begin{array}{l}\vspace{-6ex}\\
  \ds{X_{n-1}}\hs  =\hs\hs I_n\hs -\hs
    \frac{a_{n,n}}{a_{n-1,n}} X_n\hs,\\
  \hts\ds{X_{k-1}}\hs  =\hs\hs I_k\hs -\hs
    \frac{a_{k,k}}{a_{k-1,k}} X_k\hs -\hs
    \frac{a_{k+1,k}}{a_{k-1,k}} X_{k+1}
  \quad
    (n > k > 1)\hs,
\end{array}\quad\right\}
\label{SimplRecAlg}
\end{equation}
where we assume that\hs $a_{k-1,k}\neq 0$ ($1 < k\leq n$).

The above simple observation laid the basis of two recent algorithms,
\cite{Mikkawy08} and \cite{Hadj08}, for inverting tridiagonal matrices.
In \cite{Mikkawy08}, the last column $X_n$ is computed using the LU
factorisation without pivoting, while in \cite{Hadj08}, the Miller
algorithm --- a classical algorithm for computing the \emph{minimal}
solutions of second order difference equations --- has
been rediscovered.

Let us test the stability of the above scheme for
a very small and very well conditioned matrix\vspace{0.25ex}
\begin{equation}
  A\hs\hs =\hs\hs
  \begin{pmatrix}\hspace*{0.5ex}
    2016 & 1 & & & & \\\hspace*{0.5ex}
    1 & 2016 & 1 & & & \\
    & 1 & 2016 & 1 & & \\
    & & 1 & 2016 & 1 & \\
    & & & 1 & 2016 & 1\hspace*{1ex} \\
    & & & & 1 & 2016\hspace*{1ex} \\
  \end{pmatrix}.
\label{Matrix2016}
\vspace{0.5ex}
\end{equation}
For the inverse matrix $X$ computed in the double precision
arithmetic using the algorithm \cite{Mikkawy08} or \cite{Hadj08}
(the result does not depend on which of the above ways the last column
of $X$ is calculated), we have
\begin{equation*}
  \| AX - I\hts\|_2\hs \simeq\hs \| X\httbs A - I\hts\|_2\hs \simeq\hs \textbf{4.4} ,
\end{equation*}
while $\condd{A}=1.002$. If the inverse is evaluated in \Matlab:
\texttt{X\,=\,\htts A\textbackslash eye(6)}, the residual errors satisfy
$\|AX-I\hts\|_2\simeq\|X\httbs A-I\hts\|_2\simeq\eps\simeq\mfl{2.2}{-16}$.
The explanation of such a huge instability of the simple recursive algorithms
based on (\ref{SimplRecAlg}) is quite simple if we recall some basic facts
of the theory of linear second order difference equations
(see, for example, \cite{Gautschi67} or \cite{Wimp84Book}).

The three term (second order) homogenous recurrence (difference) equation
can be written in general form as follows:
\begin{equation}
  a_{-\htbs1}(k)\hts x_{k-1}\hs +\hs a_{0}(k)\hts x_{k}\hs +\hs
  a_{1}(k)\hts x_{k+1}\hs =\hs 0
    \qquad ( k > 1,\hs\hs a_{\pm1}(k)\neq 0) ,
\label{GenDiffEq}
\end{equation}
where $a_{j}(k)$ ($j=-1,0,1$) are known coefficients, and $\{x_k\}_{k=1}^{\infty}$
is a solution we are looking for. The equation (\ref{GenDiffEq}) has two-dimensional
space of solutions. If there exist two linearly independent solutions $\{u_k\}$
and $\{y_k\}$ such that
\begin{equation*}
  \lim\limits_{k\to\infty}\frac{u_k}{y_k}\hs =\hs 0\hs,
\end{equation*}
then $\{u_k\}$ is called a \textit{minimal}, and $\{y_k\}$ is called
a \textit{dominant} \hts solution. The \textit{forward recursion} algorithm,
\begin{equation*}
  x_{k+1} =
   - a_{1}(k)^{\htbs-1}\big(a_{0}(k)\hs x_{k}
   + a_{-\htbs1}(k)\hs x_{k-1}\big)\hts,
\end{equation*}
is stable for dominant solutions only, while the
\textit{backward recursion},
\begin{equation*}
  x_{k-1} =
  -a_{-\htbs1}(k)^{\htbs-1}\big(a_{0}(k)\hs x_{k}
  +a_{1}(k)\hs x_{k+1} \big)\hts,
\end{equation*}
is stable only for minimal solutions (at the present point, we do not consider
the problem of obtaining the initial values for the above recurrences).
If \hts for any pair\hs $\{u_k\}$,\hs $\{y_k\}$\hs of independent
solutions we have
\begin{equation*}
  \liminf\limits_{k\to\infty}\frac{u_k}{y_k}\hs =\hs w > 0\hs,\qquad
  \limsup\limits_{k\to\infty}\frac{u_k}{y_k}\hs =\hs \mathcal{W} < \infty\hs,
\end{equation*}
where $w,\hs \mathcal{W}^{-1}\gg \eps$, then both forward and
backward recursion algorithms are stable (in the asymptotic sense).

In practice, we are usually interested in computing only a part
of a solution of the recurrence equation (\ref{GenDiffEq}), i.e.\ the values of
$x_k$ for $1\leq k\leq n$ for some $n>0$. Note that the starting point ($k=1$)
and the main (forward) direction of the recursion ($k=2,3,\dots$)
is only a convention.

The space of all minimal solutions of the linear second order difference equation
is one-dimensional. An important property of the minimal solutions and
the algorithm for computing the values of such a solution is given
in the following theorem.

\begin{theorem}[Miller] \label{TwMiller}
Assume that a three term recurrence equation (\ref{GenDiffEq})
has a minimal solution $\{u_k\}$ which satisfies a normalising condition
\begin{equation*}
  \sum_{k=1}^{M} c_k u_k = S \neq 0  \qquad (S,c_k\in\mathbb{R},\,\,\, 0 < M < \infty) .
\end{equation*}
For $\hts n>1\hts$, define the values $x^{[n]}_k$ ($1\leq k\leq n+1$) as follows:
\begin{align*}
  & x^{[n]}_{n+1} = 0, \quad x^{[n]}_n = d \neq 0,\\[0.75ex]
  & x^{[n]}_{k-1} =
  - a_{-\htbs1}(k)^{\htbs-1}\big(a_{0}(k)\hs x^{[n]}_k
  + a_{1}(k)\hs x^{[n]}_{k+1}\big) \qquad (n \geq k > 1)\hts.
\vspace{0.75ex}
\end{align*}
Then, for each $k\geq 1$
\begin{equation*}
  \quad\lim\limits_{n\to\infty}
    S_n^{-1} S x^{[n]}_k =\hs
    \htts u_k\hts,
\end{equation*}
where\vspace{-1ex}
\begin{equation*}
  S_n = \sum\limits_{k=0}^{M}c_k x^{[n]}_k
\vspace{-0.5ex}
\end{equation*}
(we assume that $x^{[n]}_k=0$ if $k>n$).
\end{theorem}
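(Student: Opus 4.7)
The plan is to exploit the fact that the space of solutions of the three-term recurrence (\ref{GenDiffEq}) is two-dimensional. Fix a dominant solution $\{y_k\}$ that is linearly independent from $\{u_k\}$; by the very definition of a minimal solution, $u_{n+1}/y_{n+1}\to 0$ as $n\to\infty$. Since $\{x^{[n]}_k\}$ satisfies the same recurrence (\ref{GenDiffEq}) for $2\leq k\leq n$, there must exist scalars $\alpha_n,\beta_n\in\mathbb{R}$ with
\[
  x^{[n]}_k \;=\; \alpha_n u_k + \beta_n y_k \qquad (1\leq k\leq n+1),
\]
and I would use the two ``terminal'' conditions at $k=n$ and $k=n+1$ to identify them.

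First I would impose $x^{[n]}_{n+1}=0$, which yields $\beta_n=-\alpha_n u_{n+1}/y_{n+1}$ (assuming $y_{n+1}\neq 0$, a point I return to below); substituting back,
\[
  x^{[n]}_k \;=\; \alpha_n\!\left(u_k \,-\, \frac{u_{n+1}}{y_{n+1}}\,y_k\right).
\]
The second condition $x^{[n]}_n=d\neq 0$ then fixes $\alpha_n$ explicitly, and in particular guarantees $\alpha_n\neq 0$ for all sufficiently large $n$. Since $u_{n+1}/y_{n+1}\to 0$ and $y_k$ is a fixed number for each fixed $k$, I immediately get
\[
  \frac{x^{[n]}_k}{\alpha_n} \;\longrightarrow\; u_k \qquad (n\to\infty).
\]

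Next I would plug the same representation into the normalising sum:
\[
  S_n \;=\; \sum_{k} c_k\, x^{[n]}_k \;=\; \alpha_n\!\left(\sum_{k} c_k u_k \,-\, \frac{u_{n+1}}{y_{n+1}}\sum_{k} c_k y_k\right).
\]
The first inner sum equals $S$ by the normalising hypothesis, the second is a finite fixed number, so $S_n/\alpha_n\to S\neq 0$; in particular, $S_n\neq 0$ for all sufficiently large $n$. Combining the two limits,
\[
  S_n^{-1}\, S\, x^{[n]}_k \;=\; \frac{S}{S_n/\alpha_n}\cdot\frac{x^{[n]}_k}{\alpha_n} \;\longrightarrow\; \frac{S}{S}\cdot u_k \;=\; u_k,
\]
as required.

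The main obstacle is a technical nuisance rather than a conceptual one: the decomposition $\beta_n=-\alpha_n u_{n+1}/y_{n+1}$ can fail for exceptional indices $n$ where $y_{n+1}=0$ (or, symmetrically, where the denominator determining $\alpha_n$ vanishes). This is handled by observing that the asymptotic relation $u_{n+1}/y_{n+1}\to 0$ forces $y_{n+1}\neq 0$ from some index onward, so the argument applies verbatim for all large $n$, which is exactly what the limit statement needs. A secondary subtlety concerns the range of summation in $S_n$ versus $S$ (the former starts at $k=0$, the latter at $k=1$); this is absorbed either by convention ($c_0=0$) or by including the extra term inside the finite sums above, and does not affect the limit.
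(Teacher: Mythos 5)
Your proof is correct. The paper itself gives no argument for this theorem --- its ``proof'' is only a citation to Miller's tables and to Wimp's monograph --- but the route you take (extending $x^{[n]}$ to a global solution, writing it as $\alpha_n u_k+\beta_n y_k$, eliminating $\beta_n$ through the terminal condition $x^{[n]}_{n+1}=0$, and letting $u_{n+1}/y_{n+1}\to 0$) is precisely the classical argument found in those references, and your treatment of the two side issues (the finitely many indices with $y_{n+1}=0$, which are excluded because the existence of the limit $u_k/y_k\to 0$ forces $y_k\neq 0$ eventually, and the $k=0$ versus $k=1$ discrepancy in the normalising sum) is adequate.
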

\begin{proof}
See \cite{Miller52} or \cite[\S\hts 4]{Wimp84Book}.
\end{proof}

Let us consider again the recursion (\ref{SimplRecAlg}), but only for the
elements of the first row of the inverse matrix $X$. We may of course write that $x_{1,n+1}=0$.
Additionally, if $a_{k-1,k}\neq 0$ for all $1 < k\leq n$, then, theoretically,  $x_{1,n}\neq 0$ (this
will be justified in the later part of the paper; see also \cite{Lewis82}). Consequently, we have\vspace{-1ex}
\begin{equation}
\renewcommand{\arraystretch}{1.6}
\begin{array}{l}
     x_{1,n+1}\hts =\hts 0, \quad x_{1,n}\hts \neq\hts 0, \\
\hts x_{1,k-1}\hts =\hts
     -{a_{k-1,k}^{-1}}{\big(a_{k,k}\hts x_{1,k}
     + a_{k+1,k}\hts x_{1,k+1}\big)}
       \qquad (n \geq k > 1)
\end{array}
\label{Row1SimplRec}
\end{equation}
and (also from the equation $X\httbs A=I$\hts) $a_{1,1}x_{1,1}+a_{2,1}x_{1,2} = 1 \neq 0$.
Comparing the above formulae to the ones of Theorem \ref{TwMiller}, we may conclude that
if the recurrence equation for the first row of the inverse matrix $X$ has minimal
and dominant solutions, then the elements $x_{1,1},x_{1,2},\dots,x_{1,n}$ of the
first row of $X$ behave more like a minimal solution than like a dominant one.
Therefore, the recursion (\ref{SimplRecAlg}) which is the backward
recursion for $x_{1,k}$ ($n\geq k\geq 1$) is stable for all
elements of the first row of the matrix $X$. In the case of the $s$-th row,
we have\vspace{-1ex}
\begin{equation*}
\renewcommand{\arraystretch}{1.6}
\begin{array}{l}
     x_{s,n+1}\hts =\hts 0\hts, \qquad
     x_{s,k-1}\hts =\hts
     -{a_{k-1,k}^{-1}}{\big(a_{k,k}\hts x_{s,k}
     + a_{k+1,k}\hts x_{s,k+1}\big)}
       \qquad (n\geq k > s) .
\end{array}
\end{equation*}
Note that this recurrence is exactly the same as the recurrence (\ref{Row1SimplRec}), only the
starting value $x_{1,n}$ is replaced by $x_{s,n}$. This implies that the recursion (\ref{SimplRecAlg})
is stable for all elements in the upper triangle of $X$. In the lower triangle of $X$,
the situation is reversed. The recursion (\ref{SimplRecAlg}) is the forward
recursion algorithm for the elements $x_{s,k}$ ($s > k \geq 1$, $s>1$), and
therefore may be unstable if the corresponding recurrence equation,
\begin{equation*}
  a_{k+1,k}\hts x_{s,k+1} + a_{k,k}\hts x_{s,k} + a_{k-1,k}\hts x_{s,k-1} = 0 \qquad (s > k\geq 2) ,
\end{equation*}
has minimal and dominant solutions. Indeed, in the case of the matrix $X$ computed
using (\ref{SimplRecAlg}), where $A$ is given by (\ref{Matrix2016}), we have
\begin{equation*}
  | X - A^{-1} |\hs\hs \simeq\hs\hs \|A^{-1}\|
  \begin{pmatrix}
  \htts\ufl{6}{-16} & \mfl{2}{-19} & \mfl{1}{-22} & \mfl{1}{-25} & \mfl{6}{-29} & \mfl{3}{-32}\hs \\
  \htts\mfl{6}{-13} & \ufl{4}{-16} & \mfl{3}{-19} & \mfl{1}{-22} & \mfl{1}{-25} & \mfl{6}{-29}\hs \\
  \htts\mfl{1}{-09} & \mfl{8}{-13} & \ufl{6}{-16} & \mfl{3}{-19} & \mfl{2}{-22} & \mfl{1}{-25}\hs \\
  \htts\mfl{1}{-06} & \mfl{8}{-10} & \mfl{4}{-13} & \ufl{4}{-16} & \mfl{2}{-19} & \mfl{1}{-22}\hs \\
  \htts\mfl{2}{-03} & \mfl{1}{-06} & \mfl{4}{-10} & \mfl{2}{-13} & \ufl{2}{-16} & \mfl{2}{-19}\hs \\
  \htts\mfl{4}{+00} & \mfl{2}{-03} & \mfl{1}{-06} & \mfl{5}{-10} & \mfl{2}{-13} & \ufl{2}{-16}\hs
\end{pmatrix}
\end{equation*}
(for readability, diagonal elements are underlined). 

The recursion (\ref{SimplRecAlg}) was obtained from the matrix equation $X\httbs A=I$. The second, twin
equation, $AX=I$, implies the analogous recurrence for rows of the matrix $X$.
From the discussion above, the important result follows.

\begin{corollary} \label{WnTowardsDiag}
If the elements of the inverse $X$ of a tridiagonal matrix are computed recursively,
then, in general, the algorithm is stable only if the recurrences are carried out towards
the main diagonal of $X$.
\end{corollary}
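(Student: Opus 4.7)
The plan is to consolidate the two strands of analysis already sketched in the preceding discussion into a single symmetric statement that covers both triangles of $X$ and both matrix identities $XA=I$ and $AX=I$. First, I would fix a row index $s$ and view $\{x_{s,k}\}_{k=0}^{n+1}$ (with the convention $x_{s,0}=x_{s,n+1}=0$) as a sequence in the column index $k$. The identity $XA=I$, read column by column, forces this sequence to satisfy the homogeneous three-term recurrence~(\ref{GenDiffEq}) with coefficients $a_{-1}(k)=a_{k-1,k}$, $a_{0}(k)=a_{k,k}$, $a_{1}(k)=a_{k+1,k}$ at every index $k\neq s$, with a unit inhomogeneity at $k=s$ and the two vanishing boundary conditions just recalled.

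Next, I would split the row at the diagonal entry and treat the two half-rows separately. On the interval $s\leq k\leq n+1$, the hypotheses of Theorem~\ref{TwMiller} apply directly: the end value $x_{s,n+1}=0$ together with $x_{s,n}\neq 0$ (which holds generically, and can be justified from the $LU$ structure of a nonsingular tridiagonal matrix as in~\cite{Lewis82}) identifies this half-row with the minimal solution of the recurrence read in the direction of increasing $k$, so the backward recursion from $k=n$ down to $k=s$ --- that is, \emph{towards} the diagonal --- is the stable one. On the interval $0\leq k\leq s$, the same argument with the index direction reversed gives the analogous conclusion: the vanishing condition at $k=0$ together with $x_{s,1}\neq 0$ identifies this half-row with the minimal solution of the recurrence read in the direction of decreasing $k$, so the recursion from $k=1$ up to $k=s$ --- again towards the diagonal --- is the stable one.

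The same reasoning then transfers verbatim to columns of $X$ through the twin identity $AX=I$, which yields an analogous three-term recurrence in the row index for every fixed column, with analogous vanishing conditions at the first and last rows. A two-sided application of Miller's theorem along columns shows that the stable direction is again always the one heading towards the diagonal, and combining this with the row-wise conclusion yields the corollary. The main technical obstacle in making the plan fully rigorous is the genericity clause at the two ends: one must verify that the corner entries $x_{s,n}$, $x_{s,1}$, and their column-wise counterparts do not vanish, so that Miller's framework produces the genuine minimal solution rather than a degenerate one --- which is precisely why the conclusion is stated ``in general'' rather than unconditionally.
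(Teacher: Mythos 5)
Your proposal is correct and takes essentially the same route as the paper: it identifies each half-row of $X$ (and, via the twin identity $AX=I$, each half-column) with a minimal solution of the associated three-term recurrence anchored at the vanishing boundary entry, and invokes Miller's theorem to conclude that only the towards-the-diagonal direction of recursion is stable, with the same genericity caveat about the nonvanishing of the corner entries. The paper carries this out explicitly for the upper triangle and infers the lower triangle by reversal of the index direction (supported by the numerical example with the matrix (\ref{Matrix2016})), which is precisely your two-sided argument.
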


We end this subsection by formulating the algorithm for stable recursive computation
of the last column of $X$ (the algorithm is a consequence of Theorem \ref{TwMiller}
and the equation $AX_n=I_n$, and is a particular case of the
\emph{Miller backward recursion algorithm}):
\begin{align*}
    y_{0} & =\hts 0, \quad y_{1}\hts =\hts 1 \\[0.75ex]
  y_{k+1} & =\hts - a_{k,k+1}^{-1}\big(a_{k,k-1}\hts y_{k-1} + a_{k,k}\hts y_{k}\big)
             \quad\, (1\leq k < n), \\[0.75ex]
  x_{k,n} & =\hts f y_{k}
             \quad\, (1 \leq k \leq n),
\end{align*}
where the normalising factor $f = \big( a_{n,n-1} y_{n-1} + a_{n,n} y_{n} \big)^{-1}$.

\subsection{Two-way recursion} \label{SubSecTwoWay}

By Corollary \ref{WnTowardsDiag}, we conclude that a single
recursion, like (\ref{SimplRecAlg}), cannot lead to a stable algorithm
for inverting tridiagonal matrices. However, as the algorithm (\ref{SimplRecAlg})
computes the upper triangle of the inverse matrix $X$ correctly, we may use
a similar scheme to compute the elements in the lower triangle, namely,
compute the first column $X_1$ using the Miller algorithm, and then
compute recursively the elements below the diagonal in the consecutive
columns $X_k$ ($k=2,3,\dots,n-1$). The scheme looks as follows:
compute the columns $X_1$ and $X_n$ using the Miller
algorithm, and then set\vspace{-1ex}
\begin{equation}
\renewcommand{\arraystretch}{2.25}
\hspace*{-3ex}\left.\begin{array}{rl}\vspace*{-7ex}\\
  \ds\big\{x_{s,n-1}\big\}_{s\hts=\hts1}^{n\htbs-\htbs1} \hspace*{-0.875ex} & \htbs=\htts\ds
    -\hs\frac{a_{n,n}}{a_{n-\htbs1\htbs,n}}\hs
     \big\{x_{s,n}\big\}_{s\hts=\hts1}^{\hts n\htbs-\htbs1}\hts, \\
  \ds\big\{x_{s,k-1}\big\}_{s\hts=\hts1}^{\hts k\htbs-\htbs1} \hspace*{-0.875ex} & \htbs=\htts\ds
    -\hs\frac{a_{k,k}}{a_{k-\htbs1\htbs,k}}\hs
      \big\{x_{s,k}\big\}_{s\hts=\hts1}^{\hts k\htbs-\htbs1}\htts
    -\hs\frac{a_{k+1,k}}{a_{k-\htbs1\htbs,k}}\hs
      \big\{x_{s,k+1}\big\}_{s\hts=\hts1}^{\hts k\htbs-\htbs1} \quad
  (n > k > 2)\hts, \\
  \ds\big\{x_{s,2}\big\}_{s\hts=\hts3}^n \hspace*{-0.875ex} & \htbs=\htts\ds
    -\hs\frac{a_{1,1}}{a_{2,1}}\hs
      \big\{x_{s,1}\big\}_{s\hts=\hts3}^n\hts, \\
  \ds\big\{x_{s,k+1}\big\}_{s\hts=\hts k\htbs+\htbs2}^n \hspace*{-0.875ex} & \htbs=\htts\ds
    -\hs\frac{a_{k,k}}{a_{k+\htbs1\htbs,k}}\hs
      \big\{x_{s,k}\big\}_{s\hts=\hts k\htbs+\htbs2}^n\htts
    -\hs\frac{a_{k-1,k}}{a_{k+\htbs1\htbs,k}}\hs
      \big\{x_{s,k-1}\big\}_{s\hts=\hts k\htbs+\htbs2}^n \quad
  (1 < k < n-1)\hts.
\end{array}\right\}
\label{TwoWayRecAlg}
\vspace{0.25ex}
\end{equation}
We assume that $a_{k-1,k}\neq 0$ and $a_{k,k-1}\neq 0$ for $1 < k\leq n$. The above
algorithm appears to be presented in print for the first time, but it is still not a very good
one as we shall justify below.

Observe that the formulae (\ref{TwoWayRecAlg}) are based on the equations
\begin{equation}
  a_{k-1,k}\hts x_{s,k-1} + a_{k,k}\hts x_{s,k} + a_{k+1,k}\hts x_{s,k+1} = 0
    \quad\,\, (1 \leq s \leq n,\,\, 1 \leq k \leq n,\,\, k\neq s )
\label{XAscalarRec}
\end{equation}
(throughout the paper we assume than $a_{ij}=0$ and $x_{ij}=0$ if $i<1$, or $i>n$, or $j<1$, or $j>n$),
while the Miller algorithm for computing the columns $X_1$ and $X_n$ is based on the equations\vspace{-0.25ex}
\begin{equation}
  a_{k,k+1}\hts x_{k+1,j} + a_{k,k}\hts x_{k,j} + a_{k,k-1}\hts x_{k-1,j} = 0
    \quad\,\, (1 \leq j \leq n,\,\, 1 \leq k \leq n,\,\, k\neq j ) .
\label{AXscalarRec}
\end{equation}
Theoretically, in our applications of the Miller algorithm we use the above equations only for
$j\in\{1,n\}$. However, as $k\neq j$, all the equations (\ref{AXscalarRec}) for $1\leq j < n$ and $k>j$
are in fact the same recurrence equations (an analogous observation is true for $n\geq j > 1$ and $k<j$).
The normalising factors used for computing the columns $X_1$ and $X_n$ by the Miller algorithm
are derived from the two additional equations,
\begin{equation*}
  a_{1,1}\hts x_{1,1} + a_{1,2}\hts x_{2,1} = 1 \quad\mathrm{and}\quad
  a_{n,n-1}\hts x_{n-1,n} + a_{n,n}\hts x_{n,n} = 1 .
\end{equation*}
But the remaining equations,\vspace{-1.25ex}
\begin{equation}
\renewcommand{\arraystretch}{1.33}
\left.\begin{array}{l}
  a_{k-1,k} x_{k,k-1} + a_{k,k} x_{k,k} + a_{k+1,k} x_{k,k+1} = 1 , \\
  a_{k,k+1} x_{k+1,k} + a_{k,k} x_{k,k} + a_{k,k-1} x_{k-1,k} = 1 ,
\end{array}\quad\right\}
\label{MissingOnes}
\end{equation}
for $1 < k < n$, are nowhere used in the above algorithm. In other words,
the presented two-way recursion algorithm computes the matrix $X$ which
satisfy the following system of matrix equations
\begin{equation*}
  X\htbs A = D, \qquad  A\hts X = H,
\end{equation*}
where $D$ and $H$ are diagonal matrices for which we only
know that $h_{1,1} = h_{n,n} = 1$. An interesting question
arises: does the algorithm based on (\ref{TwoWayRecAlg}) computes the actual inverse of
a tridiagonal matrix $A$, assuming that the computations are exact? The answer
to the above question is delivered by the following theorem.
\begin{theorem} \label{TwTwoOnesOnly}
Let $A\in\mathbb{R}^{n\times n}$ be a non-singular tridiagonal matrix
such that $a_{k,k-1}\neq 0$ and $a_{k-1,k}\neq 0$ for $1 < k\leq n$.
If a matrix $X$ is a solution of the system of matrix equations
\begin{equation}
  X\htbs A = D, \qquad  A\hts X = H,
\label{DHeqs}
\end{equation}
where $D$ and $H$ are diagonal matrices and $H$ satisfy
$h_{1,1} = h_{n,n} = 1$, then $D=H=I$ and, consequently, $X=A^{-1}$.
\end{theorem}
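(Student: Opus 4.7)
The plan is to convert the pair of equations (\ref{DHeqs}) into a single matrix identity $AD=HA$ and then read off the diagonal entries of $D$ and $H$ directly from the tridiagonal structure of $A$. Since $A$ is non-singular, $XA=D$ gives $X=DA^{-1}$ and $AX=H$ gives $X=A^{-1}H$; setting these equal and multiplying through by $A$ on both sides produces
\[
 AD \;=\; HA .
\]

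Matching the $(i,i+1)$- and $(i,i-1)$-entries of $AD$ and $HA$ and using $a_{i,i+1},a_{i+1,i}\neq 0$ collapses everything to the scalar relations $h_i=d_{i+1}$ for $i=1,\dots,n-1$ and $h_i=d_{i-1}$ for $i=2,\dots,n$. These force $d_{i-1}=d_{i+1}$ for every interior $i$, so the diagonal of $D$ is constant separately on the odd- and on the even-indexed positions. The boundary normalisations $h_{1,1}=h_{n,n}=1$ give $d_2=h_1=1$ and $d_{n-1}=h_n=1$. For $n$ even, these two equalities travel along the two parity chains and pin down $D=I$ immediately; then $H=I$ follows from the scalar relations, and $X=A^{-1}H=A^{-1}$.

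For $n$ odd, only the even chain $d_2=d_4=\cdots=d_{n-1}=1$ is fixed this way, while the odd chain $d_1=d_3=\cdots=d_n$ remains a free constant $c$. To kill $c$, I would exploit that $X$ admits both representations $X=DA^{-1}$ and $X=A^{-1}H$ and compare their $(1,n)$-entries, obtaining
\[
 (d_1-h_n)\,(A^{-1})_{1,n}\;=\;0 .
\]
The standard cofactor expansion gives $(A^{-1})_{1,n}=\pm\,a_{1,2}a_{2,3}\cdots a_{n-1,n}\big/\det A$, which is non-zero by the hypothesis on the super-diagonal entries of $A$. Hence $d_1=h_n=1$, the odd chain collapses to $1$ as well, so $D=I$, and then $H=I$ and $X=A^{-1}$ follow exactly as in the even case.

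The main (and only) non-routine step is the $n$ odd case: the relations coming from $AD=HA$ alone respect a parity symmetry that the normalisations $h_{1,1}=h_{n,n}=1$ cannot break on their own. The extra input needed is the non-vanishing of the corner entry $(A^{-1})_{1,n}$, which is a direct consequence of the irreducibility built into the standing hypothesis $a_{k-1,k},a_{k,k-1}\neq 0$. Everything else reduces to index chasing along the two parity chains.
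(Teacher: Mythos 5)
Your proof is correct, and its main engine is identical to the paper's: derive $AD=HA$, match the $(i,i\pm1)$ entries using $a_{i,i+1},a_{i+1,i}\neq0$ to get the two parity chains, and observe that the normalisations $h_{1,1}=h_{n,n}=1$ close both chains when $n$ is even. The only genuine divergence is in how you break the parity obstruction for odd $n$. The paper stays inside the $AD=HA$ bookkeeping: it notes that a tridiagonal matrix of odd order with identically zero diagonal is singular, hence some $a_{k,k}\neq0$, and the corresponding $(k,k)$ entry of $AD=HA$ supplies the missing link $h_{k,k}=d_{k,k}$ between the two chains. You instead go back to $X=DA^{-1}=A^{-1}H$ and compare corner entries, using $(A^{-1})_{1,n}=\pm\,a_{1,2}\cdots a_{n-1,n}/\det A\neq0$ to force $d_{1,1}=h_{n,n}=1$; this is valid (the minor obtained by deleting row $n$ and column $1$ is lower triangular with the superdiagonal entries on its diagonal), and it has the mild advantage of needing no case split on which diagonal entry of $A$ happens to be non-zero, at the cost of importing the explicit cofactor formula for the corner of the inverse rather than arguing purely from $AD=HA$. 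Either route is a legitimate completion of the same argument.
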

\begin{proof}
The equations (\ref{DHeqs}) imply that
\begin{equation*}
AD = A\hts X\htbs A = HA.
\end{equation*}
Equating the corresponding elements of the matrices $AD$ and $HA$ in a proper order, and
recalling that $h_{1,1}=1$ and $a_{i,j}\neq 0$ for $|i-j|=1$, we obtain, in sequence,
\begin{equation}
  1 \,=\, h_{1,1} \,=\, d_{2,2} \,=\, h_{3,3} \,=\, d_{4,4} \,=\, h_{5,5} \,=\, \dots\, .
\label{dh1forward}
\end{equation}
Similarly, but using the condition $h_{n,n}=1$, we get
\begin{equation}
  1 \,=\, h_{n,n} \,=\, d_{n-1,n-1} \,=\, h_{n-2,n-2} \,=\,
                        d_{n-3,n-3} \,=\, h_{n-4,n-4} \,=\, \dots\, .
\label{dh1backward}
\end{equation}
If $n$ is even, then from (\ref{dh1forward}) and (\ref{dh1backward}) we
immediately obtain that $d_{i,i}=h_{i,i}=1$ for $1\leq i\leq n$. If $n$ is odd,
then at least one of the diagonal elements of $A$ is different from 0 (otherwise
there would be $\det(A)=0$). For simplicity, assume that $a_{1,1}\neq 0$. Then,
\begin{equation*}
  1 \,=\, h_{1,1} \,=\, d_{1,1} \,=\, h_{2,2} \,=\, d_{3,3} \,=\, h_{4,4} \,=\, \dots\, .
\end{equation*}
By combining the above result with (\ref{dh1forward}), again, we get $d_{i,i}=h_{i,i}=1$ for $1\leq i\leq n$.
\end{proof}

The above theorem implies that in theory, the algorithm based on the recursions (\ref{TwoWayRecAlg})
computes the correct inverse $X$ of a tridiagonal matrix $A$ with non-zero elements on the sub-
and super- diagonals. The situation is a little different in practice. The numerical performance
of this algorithm is far from perfection. As the lower and the upper triangles of $X$ are computed
completely independently, the values of magnitude close to $\eps\htts\cond{A}^2$ may appear%
\footnote{See Section \ref{SecNewAlg} for more detailed explanation.}
along the main diagonals of the residual matrices $|AX-I|$ and $|X\httbs A-I|$. Thus, the
search for a better algorithm has to be continued. Before it is done, we shall consider for
a moment the complexity of the methods presented so far.

It is readily seen that the algorithm based on (\ref{TwoWayRecAlg}), like the
one based on (\ref{SimplRecAlg}), has the complexity equal to $3n^2+O(n)$. However,
if we take a closer look on the equations in (\ref{TwoWayRecAlg}), we can see that the
recurrence for the elements in the upper triangle of $X$ (the first two lines) is exactly
the same for each value of the row index $s$, only the initial values are different.
Thus, we may carry out the recurrence only once, and then scale the result
according to the corresponding initial values:\vspace{-0.75ex}
\begin{equation}
\renewcommand{\arraystretch}{2.125}
\left.\begin{array}{rl}\vspace*{-6.5ex}\\
  \ds \hat{z}_{n-1} \hspace*{-0.75ex} & =\hs\ds
    -\hs\frac{a_{n,n}}{a_{n-\htbs1\htbs,n}}\hts
      \hts 1\hts, \\
  \ds \hat{z}_{k-1} \hspace*{-0.75ex} & =\hs\ds
    -\hs\frac{a_{k,k}}{a_{k-\htbs1\htbs,k}}\hts
      \hat{z}_k\hs
    +\hs\frac{a_{k+1,k}}{a_{k-\htbs1\htbs,k}}\hts
      \hat{z}_{k+1}
    \qquad (n > k > 2)\hts , \\
  \ds\big\{x_{s,k-1}\big\}_{s\hs=\hs1}^{k-\htbs1} \hspace*{-0.75ex} & =\hs
      \hat{z}_{k-1}\big\{x_{s,n}\big\}_{s\hs=\hs1}^{k-\htbs1}
   \qquad (n \geq k > 2)\hts.
\end{array}\quad\right\}
\label{FastTwoWayRec}
\end{equation}
The above scheme uses $\frac{1}{2}n^2+O(n)$ arithmetic operations. If a similar modification
is done for the computation of the lower triangle of $X$, the complexity of the whole algorithm
will drop to the smallest possible --- as the inverse of a tridiagonal matrix has, in general,
$n^2$ different elements --- asymptotic value: $n^2+O(n)$.

\subsection{The Lewis algorithm} \label{SubSecLewis}

In order to improve the numerical properties of the recursive algorithm for
inverting tridiagonal matrices, we should include the known dependence between
the elements in the upper triangle of the inverse matrix $X$ and the elements
in the lower triangle. The simple formula that relates these elements
was given in \cite{Lewis82}.
\begin{lemma}[Lewis] \label{LmLewisSymm}
If $A\in\mathbb{R}^{n\times n}$ is
a tridiagonal matrix such that $a_{k,k+1}\neq 0$ for
$k=1,2,\dots,n\htbs-\htbs1$, and $X = A^{-1}$, then
\begin{equation}
  \hspace*{-5ex}x_{k+j,k}\hs = \hs\Bigg(
    \prod_{i=k}^{k+j-1}\frac{a_{i+1,i}}{a_{i,i+1}}\Bigg)x_{k,k+j}
    \qquad(1\leq k < n,\,\, 1\leq j\leq n-k)\hts.
\label{LewisSymm}
\end{equation}
\end{lemma}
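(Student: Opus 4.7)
The plan is to prove (\ref{LewisSymm}) via a diagonal similarity that symmetrizes the off-diagonals of $A$ and then to transfer the resulting symmetry of the inverse back through the scaling.

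In the generic case where $a_{i+1,i}\neq 0$ for all $i=1,\ldots,n-1$, I would define $D := \mathrm{diag}(d_1,\ldots,d_n)$ by $d_1 := 1$ and $d_{i+1}/d_i := \sqrt{a_{i+1,i}/a_{i,i+1}}$, using any branch of the (possibly complex) square root; the denominator is non-zero by the hypothesis of the lemma. A direct check shows that $B := D^{-1}AD$ is tridiagonal with the same diagonal as $A$ and with off-diagonals $b_{i,i+1} = b_{i+1,i} = \sqrt{a_{i,i+1}a_{i+1,i}}$, so $B$ is complex symmetric. Hence $B^{-1} = D^{-1}XD$ is also symmetric, because $(B^{-1})^{T} = (B^{T})^{-1} = B^{-1}$. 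Equating its $(k,k+j)$ and $(k+j,k)$ entries gives
\begin{equation*}
  x_{k,k+j}\,\frac{d_{k+j}}{d_k}\,=\,x_{k+j,k}\,\frac{d_k}{d_{k+j}},
\end{equation*}
and combining this with the telescoping identity
\begin{equation*}
  \Bigl(\frac{d_{k+j}}{d_k}\Bigr)^{\!2}\,=\,\prod_{i=k}^{k+j-1}\Bigl(\frac{d_{i+1}}{d_i}\Bigr)^{\!2}\,=\,\prod_{i=k}^{k+j-1}\frac{a_{i+1,i}}{a_{i,i+1}}
\end{equation*}
yields (\ref{LewisSymm}).

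The degenerate case to handle separately is $a_{i+1,i}=0$ for some $i\in\{k,\ldots,k+j-1\}$. Tridiagonality then forces the entire block of $A$ in rows $\{i+1,\ldots,n\}$ and columns $\{1,\ldots,i\}$ to vanish, because $a_{i+1,i}$ is the only entry of that block which is permitted by tridiagonality to be non-zero. Thus $A$ is block upper triangular with respect to the partition $\{1,\ldots,i\}\cup\{i+1,\ldots,n\}$, hence so is $X=A^{-1}$, and because $k\leq i<k+j$ the entry $x_{k+j,k}$ lies in the vanishing lower-left block. The right-hand side of (\ref{LewisSymm}) also vanishes since the product contains the factor $a_{i+1,i}/a_{i,i+1}=0$, so the identity holds trivially.

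The main subtle point is the possibility that $\sqrt{a_{i+1,i}/a_{i,i+1}}$ is complex (when the ratio is negative). This is harmless because only the squares $(d_{i+1}/d_i)^{2}$ appear in the final identity, so (\ref{LewisSymm}) is a purely algebraic relation in the real entries of $A$ even though $D$ may be complex. A reader preferring to avoid complex scaling altogether could instead derive (\ref{LewisSymm}) from the classical cofactor expression for entries of the inverse of a tridiagonal matrix: the products $\prod_{i=k}^{k+j-1}a_{i+1,i}$ and $\prod_{i=k}^{k+j-1}a_{i,i+1}$ appear explicitly in $x_{k+j,k}$ and $x_{k,k+j}$ respectively, and the stated formula falls out on taking the ratio.
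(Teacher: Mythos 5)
The paper offers no proof of this lemma---it quotes it from Lewis with a citation, remarking only that it is a consequence of the scalar equations (\ref{NotMissingOnes})---so your symmetrization argument is necessarily your own route, and in the fully generic case it is correct. However, your two cases do not exhaust the possibilities, and the configuration that falls through the crack is exactly one where your construction breaks down. You treat (i) the case $a_{i+1,i}\neq 0$ for \emph{all} $i=1,\dots,n-1$, and (ii) the case $a_{i+1,i}=0$ for some $i\in\{k,\dots,k+j-1\}$. This leaves out the situation where every $a_{i+1,i}$ with $k\leq i\leq k+j-1$ is non-zero but $a_{i_0+1,i_0}=0$ for some $i_0$ outside that range (for instance $n=3$, $k=j=1$, $a_{3,2}=0$, $a_{2,1}\neq 0$). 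There your scaling matrix $D$ is singular: $d_{i_0+1}=d_{i_0}\sqrt{a_{i_0+1,i_0}/a_{i_0,i_0+1}}=0$ forces $d_i=0$ for all $i>i_0$, so $B=D^{-1}AD$ is undefined and the similarity argument cannot be run; at the same time the product in (\ref{LewisSymm}) contains no zero factor and $x_{k+j,k}$ need not vanish, so the block-triangularity argument of your degenerate case gives nothing. The identity is still true in this configuration, but your proof as written does not establish it.

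The repair is routine and should be stated. The vanishing subdiagonal entries partition $\{1,\dots,n\}$ into consecutive index blocks with respect to which $A$, and hence $X$, is block upper triangular; whenever $k$ and $k+j$ lie in the same diagonal block, both sides of (\ref{LewisSymm}) reduce to the corresponding statement for that smaller, fully generic tridiagonal block, to which your similarity argument applies, and when they lie in different blocks your degenerate case already covers it. Alternatively, since $\det(A)\,x_{k+j,k}\prod_i a_{i,i+1}$ and $\det(A)\,x_{k,k+j}\prod_i a_{i+1,i}$ are polynomials in the entries of $A$, the identity extends from the dense set of generic invertible matrices by continuity; or one can fall back on the cofactor formula you mention in your final sentence, which handles all cases uniformly. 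A minor further simplification: the complex square root can be avoided altogether by taking $d_{i+1}/d_i=a_{i+1,i}/a_{i,i+1}$, which gives $D^{-1}AD=A^{T}$ and hence $D^{-1}XD=X^{T}$ directly, with the same telescoping product and the same caveat about singular $D$.
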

\begin{proof}
See \cite{Lewis82}.
\end{proof}

The formula (\ref{LewisSymm}) is a direct consequence (cf.\ \cite{Lewis82}) of the equations
\begin{equation}
  \sum_{i=-1}^{1} a_{k+i,k}\hts x_{k,k+i}\hs =\hs
  \sum_{i=-1}^{1} a_{k,k-i}\hts x_{k-i,k}\hs =\hs 1 \qquad (1\leq k\leq n) ,
\label{NotMissingOnes}
\end{equation}
i.e.\ is implied, in particular, by the equations (\ref{MissingOnes}) that were
not used in the algorithm presented in the previous subsection. Now, the following
algorithm with a very favourable numerical properties can be formulated:
compute the last column $X_n$ using the Miller algorithm; compute the remaining
elements in the upper triangle of $X$ using (\ref{FastTwoWayRec}) for $k > 1$,
instead of for $k > 2$; compute the lower triangle of $X$ using (\ref{LewisSymm}).
This algorithm works very well if the two following conditions are satisfied:
if $a_{k-1,k}\neq 0$ for all $2\leq k\leq n$, and if the values $\hat{z}_k$ ($n > k\geq 1$)
in (\ref{FastTwoWayRec}) do not grow too large, causing floating-point overflow
(which is, unfortunately, a quite frequent case). A very similar algorithm was
formulated in \cite{Lewis82}, where only the last column $X_n$ is computed
in a slightly different (but mathematically and numerically equivalent)
way.

Note that due to (\ref{LewisSymm}), the complexity of the
above scheme grew to $\frac{3}{2}n^2 + O(n)$. The complexity,
however, can be reduced back to $n^2 + O(n)$ if we apply the
relation (\ref{LewisSymm}) in a little different way.
The following theorem is one of the main
results of \cite{Lewis82}:
\begin{theorem}[Lewis] \label{TwLewis}
Assume that $A$ is a non-singular tridiagonal matrix which satisfies
$a_{k-1,k}\neq 0$ and $a_{k,k-1}\neq 0$ for $2\leq k\leq n$. Let the
sequences $\{\hat{z}_k\}$, $\{z_k\}$, $\{e_k\}$
be defined in the following way:\vspace{-1.5ex}
\begin{equation}
\renewcommand{\arraystretch}{2.25}
\hspace*{-0.5ex}\left.\begin{array}{l}\vspace*{-7.25ex}\\
  \ds \hat{z}_n \hts =\hs
      \hts1,\httbs\quad
  \ds \hat{z}_{n-1}\hts =\hs
    -\hs\frac{a_{n,n}}{a_{n-\htbs1\htbs,n}},\httbs\quad
  \ds \hat{z}_{k-1}\hts =\hs
    -\hs\frac{a_{k,k}}{a_{k-\htbs1\htbs,k}}
      \hat{z}_k\hs
    -\hs\frac{a_{k+1,k}}{a_{k-\htbs1\htbs,k}}
      \hat{z}_{k+1} \quad (n>k>1), \\
  \ds z_1\hts =\hs
      \hts1,\httbs\quad
  \ds z_2\hts =\hs
    -\hs\frac{a_{1,1}}{a_{2,1}}\hts,\httbs\quad
  \ds z_{k+1}\hts =\hs
    -\hs\frac{a_{k,k}}{a_{k+\htbs1\htbs,k}}
      z_k\hs
    -\hs\frac{a_{k-1,k}}{a_{k+\htbs1\htbs,k}}
      z_{k-1} \quad (1<k<n), \\
  \ds e_1\hts =\hs
      \hts1,\httbs\quad
  \ds e_{k+1}\hts =\hs
    \frac{a_{k+1,k}}{a_{k,k+1}}
      e_k \quad (1<k<n)\hts.
\end{array}\htts\right\}\httbs
\label{AlgLewis(1)}
\vspace{0.25ex}
\end{equation}
Then, the inverse $X = A^{-1}$ satisfies\vspace{-0.5ex}
\begin{equation}
\renewcommand{\arraystretch}{1.75}
\hspace*{-0.5ex}\left.\begin{array}{l}\vspace*{-5.5ex}\\
  \ds x_{1,n}\hs\htbs =\hs
    1\htts\big/\htts(a_{1,1}\hat{z}_1+a_{2,1}\hat{z}_2), \\
  \ds x_{s,k}\hs\hts =\hs
    (e_s z_s x_{1,n}) \hts \hat{z}_k \quad (s\leq k), \quad
  \ds x_{s,k}\hs =\hs
    (e_s \hat{z}_s x_{1,n})\hts z_k \quad (s>k), \quad (1\leq k\leq n).
\end{array}\htts\right\}\httbs
\vspace{-1ex}
\label{AlgLewis(2)}
\end{equation}
\end{theorem}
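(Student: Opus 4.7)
The plan is to show that each row of $X$ decomposes, across the diagonal index $k=s$, into a multiple of $\{\hat z_k\}$ above the diagonal and a multiple of $\{z_k\}$ below it, and then to pin down the two scaling constants via Lewis's symmetry lemma together with a single normalising equation.

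First, for a fixed row index $s$, I would exploit (\ref{XAscalarRec}) to observe that $\{x_{s,k}\}_{k\geq s}$ satisfies the homogeneous linear recurrence
\begin{equation*}
  a_{k-1,k}\,y_{k-1}+a_{k,k}\,y_k+a_{k+1,k}\,y_{k+1}=0,\qquad k=s+1,\ldots,n,
\end{equation*}
together with the boundary value $x_{s,n+1}=0$. Extending $\{\hat z_k\}$ by $\hat z_{n+1}=0$, one checks that it obeys this same recurrence on the same range and the same boundary condition. Since $a_{k-1,k}\neq 0$ throughout, the recurrence propagates backward uniquely from two consecutive values, so the solution space cut out by a single boundary value is one-dimensional. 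Matching the two sequences at $k=n$ (where $\hat z_n=1$) yields $x_{s,k}=x_{s,n}\,\hat z_k$ for all $s\leq k\leq n$. An entirely analogous argument applied to the lower-triangle part $\{x_{s,k}\}_{k\leq s}$ with boundary $x_{s,0}=0$, and to $\{z_k\}$ with $z_0=0$, $z_1=1$, yields $x_{s,k}=x_{s,1}\,z_k$ for $1\leq k\leq s$.

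Second, I would compute the two scalar factors $x_{s,1}$ and $x_{s,n}$ in closed form by routing through the fully-determined rows $s=1$ and $s=n$. Specialising the first relation to $s=1$ gives $x_{1,k}=x_{1,n}\,\hat z_k$ for all $k$. Lemma~\ref{LmLewisSymm} with $k=1$, $j=s-1$ then delivers $x_{s,1}=e_s\,x_{1,s}=e_s\,x_{1,n}\,\hat z_s$. To obtain $x_{s,n}$, I would apply the lower-triangle relation to the row $s=n$ to get $x_{n,k}=x_{n,1}\,z_k$, combine with $x_{n,1}=e_n\,x_{1,n}$ (Lemma~\ref{LmLewisSymm} at $k=1$, $j=n-1$) to deduce $x_{n,s}=e_n\,x_{1,n}\,z_s$, and then invert the Lewis identity at $k=s$, $j=n-s$ using $\prod_{i=s}^{n-1}(a_{i+1,i}/a_{i,i+1})=e_n/e_s$ to conclude $x_{s,n}=(e_s/e_n)\,x_{n,s}=e_s\,x_{1,n}\,z_s$. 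Substituting these two scalars back into the proportionalities produces the stated formulas $x_{s,k}=(e_s z_s x_{1,n})\,\hat z_k$ for $s\leq k$ and $x_{s,k}=(e_s\hat z_s x_{1,n})\,z_k$ for $s>k$. Finally, I would fix $x_{1,n}$ using the $(1,1)$-entry of $XA=I$, namely $a_{1,1}x_{1,1}+a_{2,1}x_{1,2}=1$ (this is (\ref{NotMissingOnes}) at $k=1$): substituting $x_{1,1}=x_{1,n}\hat z_1$ and $x_{1,2}=x_{1,n}\hat z_2$ gives $x_{1,n}=1/(a_{1,1}\hat z_1+a_{2,1}\hat z_2)$.

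The trickiest point will be to carry out the identification of the two scalars without ever dividing by $\hat z_s$ or $z_s$, since either may legitimately vanish for particular matrices. Routing through the special rows $s=1$ and $s=n$, where both scalars are determined unambiguously (via Lewis's symmetry applied to the known first row), sidesteps this issue cleanly. One should also verify that the denominator $a_{1,1}\hat z_1+a_{2,1}\hat z_2$ is non-zero, which follows from the non-singularity of $A$: its vanishing would force the system determining the first row of $X$ to be inconsistent, contradicting the existence of $A^{-1}$.
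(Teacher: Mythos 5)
The paper does not actually prove Theorem~\ref{TwLewis}: it is quoted as one of the main results of \cite{Lewis82} and no proof environment follows the statement, so there is no in-paper argument to compare against. Your proof is correct and self-contained, and it uses only ingredients the paper does develop: the scalar recurrences extracted from $X\hts A=I$ (equation (\ref{XAscalarRec})), the symmetry relation of Lemma~\ref{LmLewisSymm}, and the normalising equation $a_{1,1}x_{1,1}+a_{2,1}x_{1,2}=1$ from (\ref{NotMissingOnes}). The two key steps both check out: (i) with $\hat z_{n+1}=0$ (and $a_{n+1,n}=0$ by the paper's convention) the sequence $\{\hat z_k\}$ satisfies the same backward recurrence and boundary condition as $\{x_{s,k}\}_{k\ge s}$, and backward uniqueness (from $a_{k-1,k}\neq 0$) gives $x_{s,k}=x_{s,n}\hat z_k$; the mirror argument with $z_0=0$ gives $x_{s,k}=x_{s,1}z_k$ below the diagonal; (ii) the identification $x_{s,1}=e_s\hat z_s x_{1,n}$ and $x_{s,n}=e_s z_s x_{1,n}$ via the rows $s=1$ and $s=n$ and the telescoping product $e_n/e_s$ is exactly the right way to avoid dividing by $\hat z_s$ or $z_s$, which can vanish; and the nonvanishing of $a_{1,1}\hat z_1+a_{2,1}\hat z_2$ is immediate since its product with $x_{1,n}$ equals $1$. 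One cosmetic remark: your computation of $e_s$ as $\prod_{i=1}^{s-1}a_{i+1,i}/a_{i,i+1}$ tacitly assumes the recurrence $e_{k+1}=(a_{k+1,k}/a_{k,k+1})e_k$ starts at $k=1$, whereas the statement writes $1<k<n$; this is evidently a typo in the theorem (otherwise $e_2$ is undefined), and your reading is the intended one.
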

It is readily seen that the equations (\ref{AlgLewis(1)})--(\ref{AlgLewis(2)})
allow to compute the inverse matrix $X=A^{-1}$ using only $n^2+O(n)$ arithmetic operations. More
importantly, the algorithm based on Theorem \ref{TwLewis} uses every single scalar
equation (cf.\ (\ref{XAscalarRec}), (\ref{AXscalarRec}) and (\ref{NotMissingOnes}))
that results from the system of matrix equations $AX=I=X\httbs A$. Therefore, we
should expect the residual errors $\|AX-I\hts\|$ and $\|X\httbs A-I\hts\|$ to be
very small. Numerical experiments confirm that.

Unfortunately, the algorithm (\ref{AlgLewis(1)})--(\ref{AlgLewis(2)}) cannot be applied
for a quite wide class of tridiagonal matrices. The first problem is that the quantities
$z_k$ ($1\leq k\leq n$) and $\hat{z}_k$ ($n\geq k\geq 1$) may grow very fast causing
the floating-point overflow. In other words, the procedure will fail if $\fl(x_{1,n})=0$
with respect to the given floating-point arithmetic (it is quite easy to show, c.f.\ \cite{Lewis82},
that in theory, we always have $x_{1,n}\neq 0$ if the assumptions of Theorem \ref{TwLewis}
are satisfied). Note that all algorithms presented so far in this paper suffer
the high risk of floating-point overflow.

Another problem is the assumption that all sub- and super- diagonal elements
are different from 0. In \cite{Lewis82} the following solution is suggested. Assume
that $a_{k+1,k}=0$ for some $1\leq k < n$. Then, we have
\begin{equation}
A^{-1}\hs =\hs\hs \begin{pmatrix}
F & T\hs \\
0 & G\hs
\end{pmatrix}^{-1} =\hs\hs\hs
\begin{pmatrix}
F^{-1} & - F^{-1}T \hts G^{-1} \\
0 & G^{-1}
\end{pmatrix},
\label{BlockInv}
\end{equation}
where $F\in\mathbb{R}^{k\times k}$, $G\in\mathbb{R}^{(n-k)\times (n-k)}$
are tridiagonal matrices, and $T\in\mathbb{R}^{k\times (n-k)}$ has only one non-zero
element $t_{k,1}=a_{k,k+1}$ (if we assume that $a_{k,k+1}\neq 0$). Consequently,
\begin{equation*}
 - F^{-1}T \hts G^{-1} = - a_{k,k+1} [F^{-1}]_k \big([(G^{-1})^T]_1)^T ,  
\end{equation*}
where by $[\hs\cdot\hs]_k$ we denote
the $k$-th column of the corresponding matrix. The matrices $F$ and $G$ are inverted by
the algorithm (\ref{AlgLewis(1)})--(\ref{AlgLewis(2)}) (using the above block form again
whenever necessary). The numerical drawback of this approach is that the inverses of $G$ and $F$
are computed independently. As a result, the elements of the residual matrices $|AX-I|$ and
$|X\httbs A-I|$ that lay along the lines corresponding to the borders of the upper-right
block in (\ref{BlockInv}) may have the magnitude of order $\eps\max\{\cond{F},\cond{G}\}\htts\cond{A}$
(see the beginning of Section \ref{SecNewAlg} for more detailed explanation). Note that
there are matrices for which $\max\{\cond{F},\cond{G}\}$ is close to $\cond{A}$.

The equations (\ref{AlgLewis(2)}) delivers compact closed-form formulae for
the elements of the inverse matrix $X$. The following known characterisation of
the inverse of a tridiagonal matrix follows immediately.
\begin{theorem} \label{TwRank1}
If $A\in\mathbb{R}^{n\times n}$ is a tridiagonal matrix and
$X = A^{-1}$, then all matrices
\begin{equation*}
  U(k) = \{x_{s,j}\}_{\hs1\leq s\leq k,\hs k\leq j\leq n}\quad
\text{\textit{and}}\quad
  L(k) = \{x_{s,j}\}_{k\leq s\leq n,\hs 1\leq j\leq k}
\end{equation*}
($1\leq k\leq n$) have rank not greater than 1.
\end{theorem}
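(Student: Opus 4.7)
The plan is to reduce the rank-one assertion to the explicit closed-form formulae of Theorem~\ref{TwLewis}, and then remove the non-vanishing hypotheses on the sub- and super-diagonals by a short density argument.

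First I would treat the generic case where $A$ satisfies the assumptions of Theorem~\ref{TwLewis}, i.e.\ $a_{k-1,k}\neq 0$ and $a_{k,k-1}\neq 0$ for $2\leq k\leq n$. In this situation, formula (\ref{AlgLewis(2)}) writes every entry of $X=A^{-1}$ as a product of a factor depending only on the row index and a factor depending only on the column index. Concretely, for $s\leq j$ one has $x_{s,j}=(e_sz_sx_{1,n})\hat z_j$, and for $s\geq j$ one has $x_{s,j}=(e_s\hat z_s x_{1,n})z_j$. For $U(k)$, every entry has indices in the range $s\leq k\leq j$, so the first formula applies uniformly and $U(k)=\alpha\beta^T$ with $\alpha_s=e_sz_sx_{1,n}$ ($1\leq s\leq k$) and $\beta_j=\hat z_j$ ($k\leq j\leq n$); this is a rank-$\leq 1$ outer product. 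The same argument, with the second formula, shows that $L(k)=\gamma\delta^T$ has rank at most $1$.

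To finish, I would remove the non-vanishing assumption by a continuity/perturbation argument. For an arbitrary invertible tridiagonal matrix $A$, let $A_\varepsilon$ denote the tridiagonal matrix obtained by adding $\varepsilon$ to every sub- or super-diagonal entry of $A$ that happens to vanish (and leaving all other entries unchanged). Since $\det$ is continuous and $\det(A)\neq 0$, the matrix $A_\varepsilon$ is invertible for all sufficiently small $\varepsilon>0$, and by construction it satisfies the hypotheses of Theorem~\ref{TwLewis}. By the first step, the corresponding submatrices $U_\varepsilon(k)$ and $L_\varepsilon(k)$ of $A_\varepsilon^{-1}$ have rank at most $1$, which is equivalent to the vanishing of all their $2\times 2$ minors. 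Because matrix inversion is continuous on the open set of invertible matrices, $A_\varepsilon^{-1}\to A^{-1}$ entrywise as $\varepsilon\to 0^+$, hence the $2\times 2$ minors of $U(k)$ and $L(k)$ are limits of zeros and therefore also vanish. This gives $\mathrm{rank}\,U(k)\leq 1$ and $\mathrm{rank}\,L(k)\leq 1$ in full generality.

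The only delicate point is the density step: I must make sure the perturbed matrices actually stay inside the hypothesis class of Theorem~\ref{TwLewis} while being invertible and close to $A$. This is why I perturb only the vanishing off-diagonal entries (rather than all of them), so that the resulting family depends on a single parameter $\varepsilon$ and invertibility for small $\varepsilon$ follows directly from $\det(A)\neq 0$ and continuity. Everything else --- the factorisation of $x_{s,j}$ into a row-only and column-only factor, and the closedness of the rank-$\leq 1$ condition --- is immediate.
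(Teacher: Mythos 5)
Your proposal is correct. The generic case is handled exactly as in the paper: when $a_{k-1,k}\neq 0$ and $a_{k,k-1}\neq 0$ for all $k$, formula (\ref{AlgLewis(2)}) exhibits each of $U(k)$ and $L(k)$ as an outer product of a row-indexed vector and a column-indexed vector (note only that the corner entry $x_{k,k}$ is nominally covered by the ``$s\leq k$'' branch, but the two branches agree there since $e_s z_s\hat z_j=e_s\hat z_s z_j$ when $s=j$, so the single outer-product form does hold on all of $L(k)$). Where you diverge from the paper is the degenerate case: the paper invokes the block decomposition (\ref{BlockInv}) and applies (\ref{AlgLewis(2)}) recursively to the diagonal blocks, which requires checking that the rank-one property survives the assembly of $F^{-1}$, $G^{-1}$ and the rank-one coupling block $-a_{k,k+1}[F^{-1}]_k([(G^{-1})^T]_1)^T$ when a window $U(m)$ or $L(m)$ straddles the block boundary; your perturbation argument (perturb only the vanishing off-diagonal entries, use continuity of $\det$ and of matrix inversion, and the closedness of the vanishing of all $2\times2$ minors) sidesteps that bookkeeping entirely and is, if anything, cleaner and more self-contained. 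The trade-off is that the paper's route is constructive --- it tells you which vectors span the degenerate blocks, which is what the later algorithmic development (Theorem \ref{TwZeroBlocks} and the extended method) actually needs --- whereas yours establishes the rank bound without producing the factors. As a proof of the stated theorem, your argument is complete and valid.
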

\begin{proof}
If $a_{k-1,k}\neq 0$ and $a_{k,k-1}\neq 0$ for $2\leq k\leq n$, the assertion is
obtained readily from (\ref{AlgLewis(2)}), otherwise from (\ref{BlockInv})
and (\ref{AlgLewis(2)}). Alternative proofs of the above property
can be found in \cite{Asplund59} and \cite{Faddeev81}.
\end{proof}

\section{The new methods} \label{SecNewAlg}
\setcounter{equation}{0}

We start this section by explaining why using all the scalar equations
that result from the condition $AX=I=X\httbs A$ is crucial for obtaining an algorithm
for inverting tridiagonal matrices that guarantees very small residual errors
$\|AX-I\hts\|$ and $\|X\httbs A-I\hts\|$. Assume that for some $1<s<n$ and $k>s$
the elements $x_{s,k-1}$, $x_{s,k}$, $x_{s,k+1}$ of the inverse matrix $X$
were evaluated as $x_{s,k+j} = c_s v_{k+j}$ ($j=-1, 0, 1$)
for some real number $c_s$, and that the quantity $v_{k-1}$
was computed from the equation
\begin{equation*}
  v_{k-1}\hts =\hs
    -\hs\frac{a_{k,k}}{a_{k-\htbs1\htbs,k}}
      v_k\hs
    -\hs\frac{a_{k+1,k}}{a_{k-\htbs1\htbs,k}}
      v_{k+1} .
\vspace{-1ex}
\end{equation*}
In such a case, we have\vspace{-0.5ex}
\begin{equation*}
  a_{k-1,k}\hts x_{s,k-1} = - a_{k,k}\hts x_{s,k}(1+\gamma_0) - a_{k+1,k}\hts x_{s,k+1}(1+\gamma_1) ,
\end{equation*}
where $|\gamma_0|,|\gamma_1| \leq 4\eps$ (for simplicity we ignore
the terms of order $\eps^2$). Consequently,
\begin{equation}
  \big| a_{k-1,k}\hts x_{s,k-1} + a_{k,k}\hts x_{s,k} + a_{k+1,k}\hts x_{s,k+1} \big| \hs\leq\hs
  4\hts\eps\big(|a_{k,k}\hts x_{s,k}|+|a_{k+1,k}\hts x_{s,k+1}|\big) .
\label{ScalarResErrGoodRec}
\end{equation}
On the other hand, if at least one of the elements $x_{s,k-1}$, $x_{s,k}$, and $x_{s,k+1}$
was computed independently of the other two, then the best we may in general expect
is that $x_{s,k+j} = y_{s,k+j}(1+\beta_j\htts\cond{A})$ ($j=-1,0,1$), where $y_{s,k+j}$
denote the exact values of the considered elements of $A^{-1}$, and $|\beta_j|\leq C\eps$
for some small $C>0$. We have, of course,
$a_{k-1,k}\hts y_{s,k-1} + a_{k,k}\hts y_{s,k} + a_{k+1,k}\hts y_{s,k+1}=0$, however,
if $y_{s,k+j}$ ($j=-1,0,1$) are replaced by their computed values $x_{s,k+j}$,
then we obtain\vspace{-1ex}
\begin{equation}
  \big| a_{k-1,k}\hts x_{s,k-1} + a_{k,k}\hts x_{s,k} + a_{k+1,k}\hts x_{s,k+1} \big| \hs\leq\hs
  C\eps\htts\cond{A}\sum_{j=-1}^{1}|a_{k+j,k}\hts y_{s,k+j}|\hts.
\label{ScalarResErrBadRec}
\vspace{-0.5ex}
\end{equation}
By comparing the inequalities (\ref{ScalarResErrGoodRec}) and (\ref{ScalarResErrBadRec}),
we conclude that the algorithm that does not \emph{fully} exploit the equations
$AX=I=X\httbs A$ may compute the inverse for which the residual error $\|X\httbs A-I\|$
(or $\|AX-I\|$) is about $\cond{A}$ times larger than in the case of the algorithm that does.
Finding an algorithm which is a complete reflection of the equations $AX=I=X\httbs A$, and
can be applied for an arbitrary tridiagonal matrix $A$ (and is stable) is not easy,
and --- to our knowledge --- has not been succeeded yet.

Before we take care of the problem described above, we will solve a little less difficult one,
related to the possible occurrence of the floating point overflow when computing the
solutions of the recurrence relations.

\subsection{The simple ratio-based method} \label{SubSecRatio}

Assume that the solution of the difference equation (\ref{GenDiffEq}) we are looking for
satisfies the condition $x_k\neq 0$ for all $k\geq 0$. If we define $\ds r_k = x_{k}^{-1}x_{k-1}$,
then the equation (\ref{GenDiffEq}) can be written in the equivalent form:
\begin{equation*}
  a_{-\htbs1}(k)\htts r_k\hs +\hs
    a_{1}(k)\hts\frac{1}{r_{k+1}}\hs =\hs -\hts a_{0}(k)
    \qquad ( k > 1,\hs\hs a_{\pm1}(k)\neq 0)\hts.
\end{equation*}
Once the ratios $r_k$ are computed, and the value $x_1$ or $x_n$ (for some $n>1$)
is known, we may easily compute all other values $x_k$:
\begin{equation*}
  x_k\hts =\hs \frac{x_{k-1}}{r_k}
  \quad\hs\hs \text{or}\quad\hs\hs
  x_{k-1}\hts =\hs r_k\hts\hts x_k \qquad (1 < k \leq n)\hts.
\end{equation*}
This approach is well know in the theory of the second order difference
equations and was described in detail in \cite{Gautschi67}. It practically
eliminates the risk of the floating point overflow when computing the
solution $\{x_k\}$ of (\ref{GenDiffEq}). The use of ratios in the
problem of inverting tridiagonal matrices was already suggested (but not strictly
formulated\footnote{The strict formulation was given later in \cite{Jain07},
but the formulae for the diagonal elements of $X$ given there
are numerically unstable in general case%
.}) in \cite{Golub85}, where the algorithm, in a sense similar to
(\ref{AlgLewis(1)})--(\ref{AlgLewis(2)}), for inverting tridiagonal
symmetric positive definite matrices with all negative sub- and super-
diagonal elements was proposed.

Let us consider the lower triangle of $X = A^{-1}$, and assume that
$x_{ij}\neq 0$\, ($1\leq i\leq n$, $1\leq j\leq n$. From the equation $X\httbs A = I$,
we have (for $1 < s\leq n$ and $1 < k < s$)\vspace{-0.5ex}
\begin{align*}
  & a_{1,1}\hs x_{s,1} + a_{2,1}\hs x_{s,2} = 0\hs, \\[0.5ex]
  & a_{k-1,k}\hs x_{s,k-1} + a_{k,k}\hs x_{s,k} +
    a_{k+1,k}\hs x_{s,k+1} = 0\hts .
\vspace{-0.5ex}
\end{align*}
Now, setting $q_k = x_{s,k-1}\big/x_{s,k}$ ($1 < k\leq n$), we immediately obtain\vspace{-0.5ex}
\begin{equation}
  q_2 = - a_{2,1} \big/ a_{1,1}, \quad
  q_{k+1} = -a_{k+1,k} \big(a_{k,k} + a_{k-1,k}\hts q_{k}\big)^{-1}
    \quad (1 < k < n)\hts.
\label{ColRatiosRec}
\end{equation}
From the equation $AX = I$, we have\vspace{-0.5ex}
\begin{equation}
  r_{n-1} = -a_{n,n-1} \big/ a_{n,n}, \quad
  r_{k-1} = -a_{k,k-1} \big(a_{k,k} + a_{k,k+1}\hts r_{k}\big)^{-1}
    \quad (n > k > 1)\hts,
\label{RowRatiosRec}
\end{equation}
where $r_k = x_{k+1,j}\big/x_{k,j}$ ($1\leq k < n$, $1\leq j\leq k$). Note that
the ratios $r_k$ do not depend on the column index $j$, and $q_k$ ($2\leq k\leq n$) do not
depend on the row index $s$ (this fact may be considered as another proof of Theorem \ref{TwRank1}).

For the upper triangle of $X$, we define
$\hat{r}_k = x_{k-1,j}\big/x_{k,j}$ ($1 < k\leq n$, $k\leq j\leq n$) and
$\hat{q}_k = x_{s,k+1}\big/x_{s,k}$ ($1\leq k < n$, $1\leq s\leq k$). For these ratios,
the recurrences similar to (\ref{ColRatiosRec}) and (\ref{RowRatiosRec}) can be also derived.
However, the upper triangle ratios should not be computed independently of the lower
triangle ones for the reasons described earlier --- the residual errors may depend on
$\cond{A}^2$ in such a case. The following result should be applied instead.
\begin{lemma} \label{LmRatiosSymm}
Let $A$ be a tridiagonal matrix satisfying $a_{k-1,k}\neq 0$, $a_{k,k-1}\neq 0$
for $2\leq k\leq n$. If the ratios $r_k$, $\hat{q}_k$ ($1\leq k\leq n-1$), and $q_k$, $\hat{r}_k$
($2\leq k\leq n$) are defined as above, then
\begin{equation}
  a_{k+1,k}\hts\hat{q}_k \hs=\hs a_{k,k+1}\hts r_k , \qquad
  a_{k,k-1}\hts\hat{r}_k \hs=\hs a_{k-1,k}\hts q_k .
\label{RatiosSymm}
\end{equation}
\end{lemma}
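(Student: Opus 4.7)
The identities state that the lower-triangle ratios $r_k,q_k$ and the upper-triangle ratios $\hat{q}_k,\hat{r}_k$, although defined from disjoint parts of $X=A^{-1}$, are proportional to each other via the single scalar factor $a_{k,k\pm1}/a_{k\pm1,k}$. This is exactly the type of cross-diagonal relation captured by Lewis's symmetry (Lemma~\ref{LmLewisSymm}), so my plan is to reduce everything to a direct application of that formula.

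For the first identity, I would start with an index $j\leq k$ and form $r_k=x_{k+1,j}/x_{k,j}$. Since both $x_{k+1,j}$ and $x_{k,j}$ lie on or below the diagonal, I apply Lewis's relation (\ref{LewisSymm}) to each, rewriting them in terms of the upper-triangle entries $x_{j,k+1}$ and $x_{j,k}$. The products $\prod_{i=j}^{k}a_{i+1,i}/a_{i,i+1}$ and $\prod_{i=j}^{k-1}a_{i+1,i}/a_{i,i+1}$ telescope against each other, leaving only the factor $a_{k+1,k}/a_{k,k+1}$. The remaining ratio $x_{j,k+1}/x_{j,k}$ is precisely $\hat{q}_k$ (taking $s=j\leq k$ in its definition). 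This yields $r_k=(a_{k+1,k}/a_{k,k+1})\hat{q}_k$, which is the first claimed equality. The boundary case $j=k$ is not a genuine obstacle: Lewis's formula with an empty product still gives $x_{k,k}=x_{k,k}$ and $x_{k+1,k}=(a_{k+1,k}/a_{k,k+1})x_{k,k+1}$, so the same computation goes through.

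For the second identity, I repeat the argument symmetrically. Starting from $q_k=x_{s,k-1}/x_{s,k}$ with $s>k$ (both entries below the diagonal), I apply Lewis's relation to each to turn them into expressions in $x_{k-1,s}$ and $x_{k,s}$. Again the products telescope, leaving the single factor $a_{k,k-1}/a_{k-1,k}$, and the surviving ratio $x_{k-1,s}/x_{k,s}$ is $\hat{r}_k$ (taking $j=s\geq k$). This gives $a_{k-1,k}q_k=a_{k,k-1}\hat{r}_k$.

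The ``hard part'' is really just bookkeeping: one has to check that the ranges of $j$ in the definitions of $r_k,\hat{r}_k$ and of $s$ in $q_k,\hat{q}_k$ are consistent with the range of validity of Lewis's formula, and in particular that the independence of $r_k$ from $j$ (and of $q_k$ from $s$) — which has already been observed right after (\ref{RowRatiosRec}) — lets one freely choose $j=s$ to match the two sides. Once that independence is invoked, both identities drop out of one line of Lewis-lemma algebra, and no further recurrence or inductive argument is needed.
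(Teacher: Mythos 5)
Your proof is correct and follows exactly the paper's route: the authors also deduce (\ref{RatiosSymm}) directly from Lewis's symmetry relation (\ref{LewisSymm}), and your telescoping computation simply fills in the one line of algebra that the paper leaves to the reader.
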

\begin{proof}
The proof follows readily from (\ref{LewisSymm}).
\end{proof}

Observe that from the equation
\begin{equation}
  a_{n-1,n}\hs x_{n,n-1} + a_{n,n}\hs x_{n,n}\hs =\hs 1
\label{StartEq}
\end{equation}
we immediately obtain that $x_{n,n} = (a_{n-1,n}q_n + a_{n,n})^{-1}$. If a tridiagonal
matrix $A$ satisfies the assumptions of Lemma \ref{LmRatiosSymm}, and its inverse has only
non-zero elements, then from (\ref{StartEq}) and (\ref{ColRatiosRec})--(\ref{RatiosSymm}), we
obtain the following set of algorithms for inverting the matrix $A$.
\begin{algorithm}[the set of algorithms] \label{AlgKW} ~\\[-3ex]
\begin{enumerate}{\rm
\renewcommand{\itemsep}{0ex}
\item Compute the ratios $q_k$ and $r_k$ according to (\ref{ColRatiosRec})--(\ref{RowRatiosRec}).
\item Compute the ratios $\hat{q}_k$ and $\hat{r}_k$ by applying (\ref{RatiosSymm}).
\item Set $x_{n,n} = (a_{n-1,n}q_n + a_{n,n})^{-1}$ (or use the analogous formula for $x_{1,1}$).
\item Using the ratios $r_k$, $q_k$, $\hat{r}_k$, and $\hat{q}_k$, compute all other elements
of $X$ from the adjacent ones, in an (theoretically) arbitrary order, using
only one multiplication or division for each element.}
\end{enumerate}
\end{algorithm}

Clearly, the above algorithm uses $n^2+O(n)$ arithmetic operations.
In Figure~\ref{FigOrder}, we present some exemplary orders the inverse matrix $X$ may
be computed in. The first of them is presented as an analogy to the algorithm based on (\ref{SimplRecAlg}).
This time, however, no instability occurs, as the recurrences for ratios are carried out in
the stable direction (the arrows on the graphs correspond to Step 4 of Algorithm
\ref{AlgKW}). The last example is presented just for fun. Note that with these two
orders, the algorithm fails if $\fl(x_{1,n})=0$. There is no such problem with the two
remaining suggested orders. The lower left one is much better suited in the case
matrices are stored column-wise in the computer memory (one may use,
of course, its row analogy if matrices are stored row-wise).
\begin{figure}[h!]
\begin{center}\vspace{1ex}
\includegraphics[width=5cm,height=5cm,angle=0]{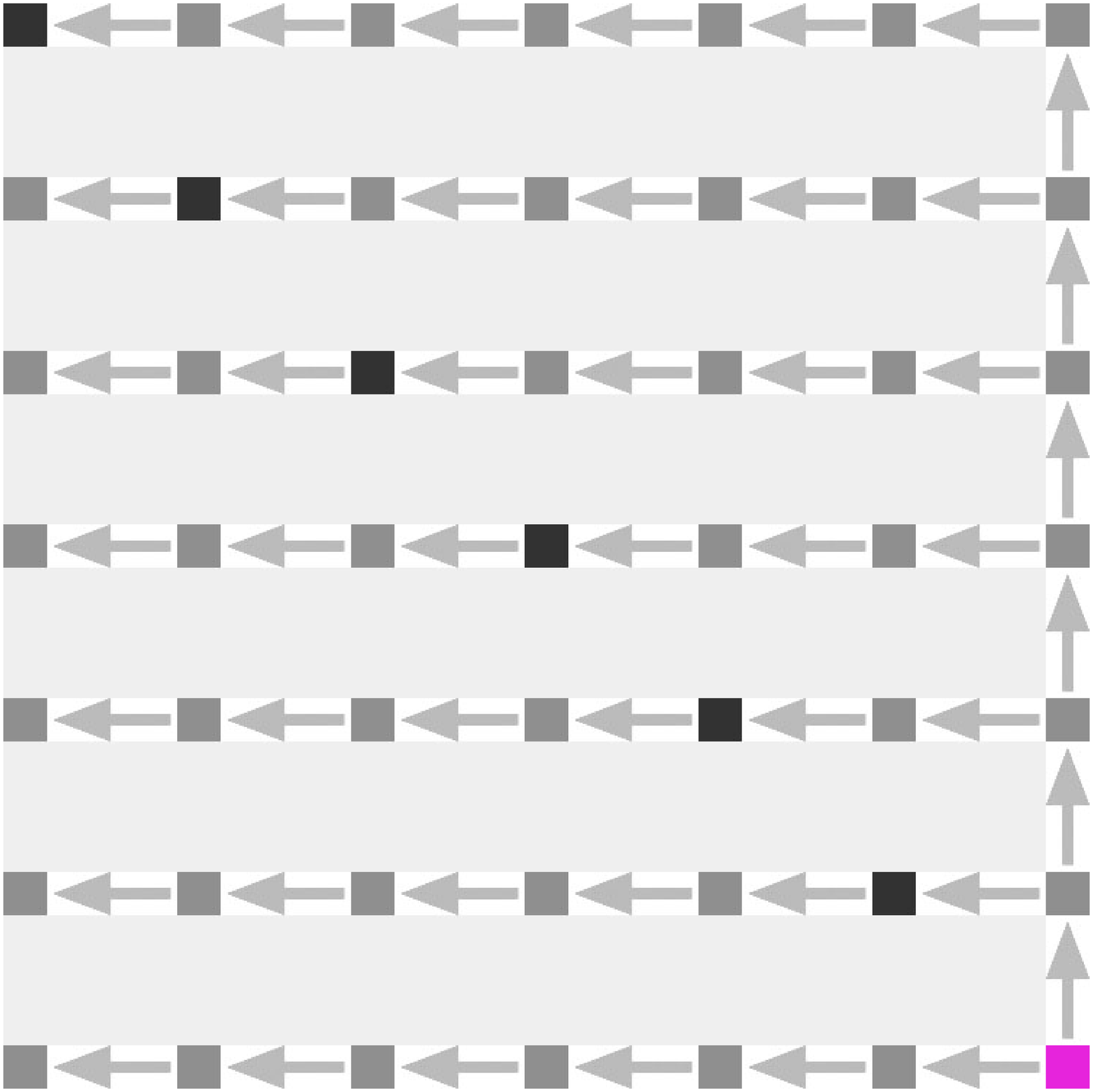}\hspace*{4ex}
\includegraphics[width=5cm,height=5cm,angle=0]{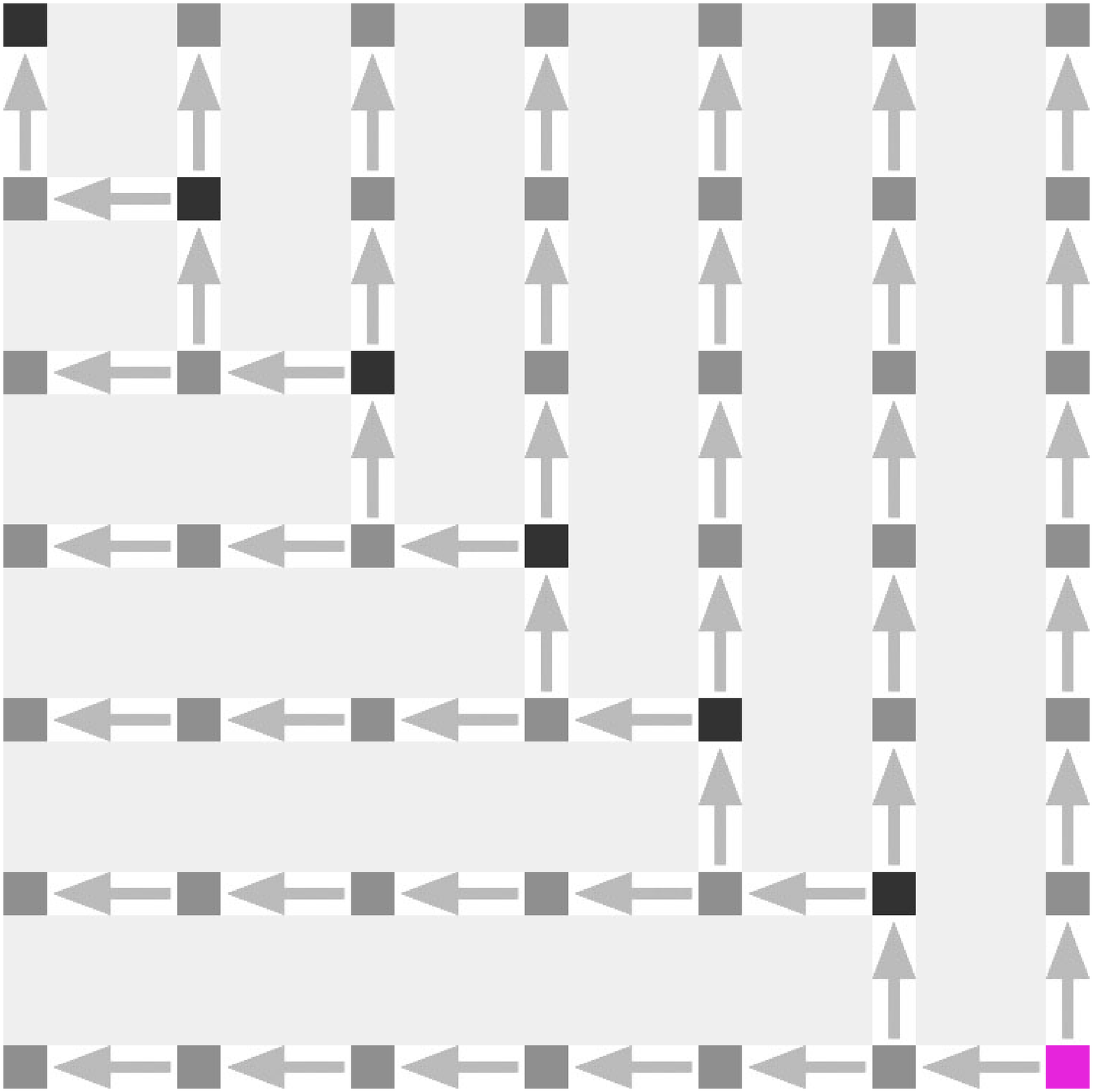}\\[4ex]
\includegraphics[width=5cm,height=5cm,angle=0]{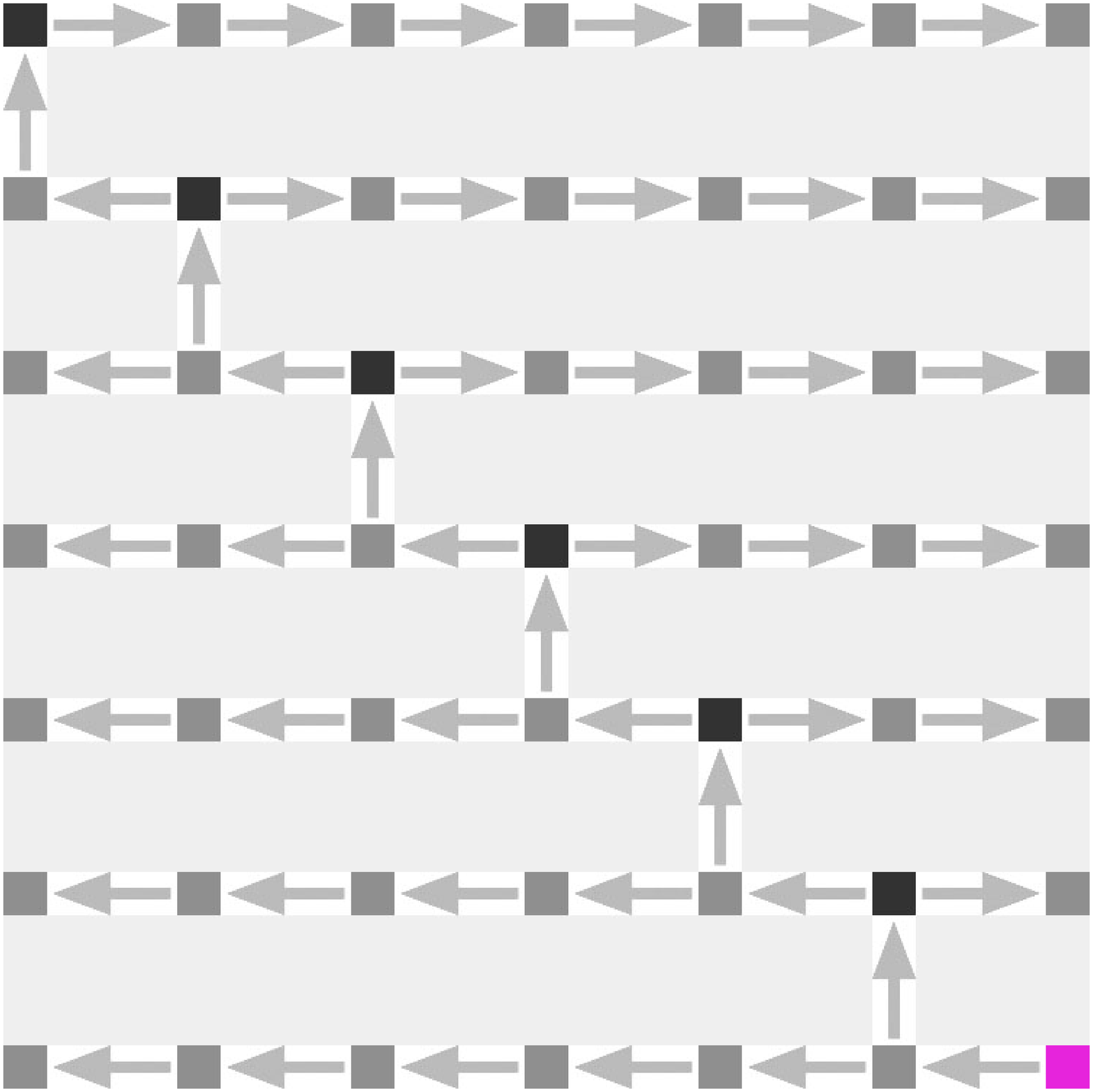}\hspace*{4ex}
\includegraphics[width=5cm,height=5cm,angle=0]{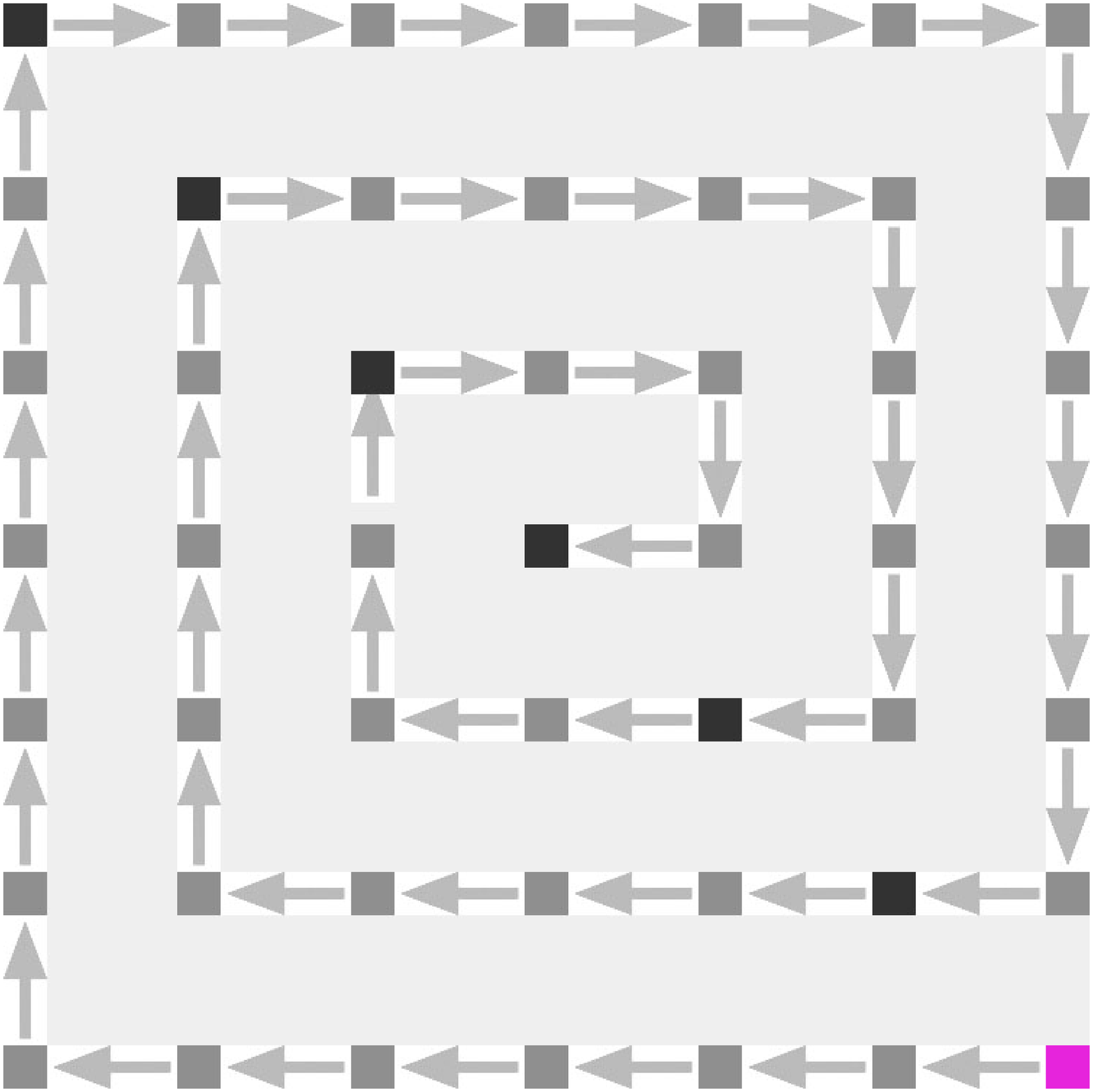}\vspace{-2ex}
\end{center}
\caption{\small Four examples of many different orders the elements of the inverse $X$
may be computed when using Algorithm \ref{AlgKW}. The initial element is coloured purple.}
\label{FigOrder}
\end{figure}

In the remaining part of the paper we shall use and extend the following
formulae, which corresponds to the lower left diagram
in Figure \ref{FigOrder}:\vspace{-0.25ex}
\begin{equation}
\renewcommand{\arraystretch}{2.25}
\left.\begin{array}{l}\vspace*{-7.5ex}\\
\ds
  q_2 = \frac{-a_{2,1}}{a_{1,1}}, \quad
  q_{k+1} = \frac{-a_{k+1,k}}{a_{k,k}+a_{k-1,k} q_k} \quad (1 < k < n)\hts , \\
\ds
  r_{n-1} = \frac{-a_{n,n-1}}{a_{n,n}}, \quad
  r_{k-1} = \frac{-a_{k,k-1}}{a_{k,k}+a_{k,k+1} r_k} \quad (n > k > 1)\hts , \\
\ds
  x_{n,n} = \big(a_{n-1,n}q_n + a_{n,n}\big)^{-1}\hts , \\[1.75ex]
\ds
  \hspace*{-1.225ex}
  {\renewcommand{\arraystretch}{1.5}
   \left.\begin{array}{l}
     x_{j,k-1} = q_k\htts x_{j,k} \quad (k\leq j\leq n) \\
     x_{k-1,k-1} = r_{k-1}^{-1} x_{k,k-1}
   \end{array}\right\}} \quad (n \geq k > 1)\hts , \\[2.5ex]
\ds
  x_{j,k+1} = \big(a_{k+1,k}^{-1} a_{k,k+1} r_{k}\big) x_{j,k}
   \quad  (1\leq j\leq k,\,\,\,1 \leq k < n)\hts .\\[-1ex]
\end{array}\quad\right\}
\label{KWbasic}
\vspace{0.75ex}
\end{equation}
The above scheme is valid only if the inverse matrix $X$ has only non-zero elements in its
lower triangle (note that this implies that $a_{k+1,k}\neq 0$ for $1\leq k < n$,
and that each ratio that appears in (\ref{KWbasic}) is finite --- for proof,
see Theorem \ref{TwZeroBlocks} below). Now, we shall investigate the
numerical properties of the algorithm based on (\ref{KWbasic}).

Consider the elements $x_{s,k-1}$, $x_{s,k}$, $x_{s,k+1}$ ($k<s$) of $X$. They are computed
as follows $x_{s,k} = q_{k+1} x_{s,k+1}$, $x_{s,k-1} = q_{k}\hts x_{s,k}$, which means that
the numerically computed values satisfy $x_{s,k+1}^{-1}x_{s,k} = q_{k+1}(1+\gamma_{k+1})$ and
$x_{s,k}^{-1}\hts x_{s,k-1} = q_{k}(1+\gamma_{k})$, where $|\gamma_{k}|,|\gamma_{k+1}|\leq\eps$.
From the first set of equations in (\ref{KWbasic}), we readily obtain that for the numerically
computed ratios $q_k$ and $q_{k+1}$ the following equality holds:
\begin{equation*}
  -a_{k+1,k}\hts q_{k+1} = a_{k,k}(1+\alpha_k) + a_{k-1,k}\hts q_{k}(1+\beta_k)
  \qquad (\hts|\alpha_k|\leq 2\eps,\,\,\, |\beta_k|\leq 3\eps)\hts
\end{equation*}
(again, we ignore the terms of order $\eps^2$).
By combining the two above results, we get
\begin{equation}
 \big| a_{k+1,k}\hts x_{s,k+1} + a_{k,k}\hts x_{s,k} + a_{k-1,k}\hts x_{s,k-1}\big|\hs \leq\hs
   4\eps\big(|a_{k,k}\hts x_{s,k}| + |a_{k-1,k}\hts x_{s,k-1}|\big) .
\label{KWlowRowEqEst}
\end{equation}
Every three adjacent elements in a column of the lower triangle of $X$ are computed
exactly as follows:\vspace{-1ex}
\begin{align*}
  x_{k-1,j}\hs &=\hs x_{k,k}\hts q_{k}/r_{k-1}\hts q_{k-1}\hts q_{k-2} \cdots\hts q_{j+1} \\[0.5ex]
  x_{k,j}\hs &=\hs x_{k+1,k}/r_k\hts q_{k}\hts q_{k-1}\htbs\cdots\hts q_{j+1} , \\[0.5ex]
  x_{k+1,j}\hs &=\hs x_{k+1,k}\hts q_{k}\hts q_{k-1}\htbs\cdots\hts q_{j+1} .
\vspace{0.5ex}
\end{align*}
Therefore, a bound analogous to (\ref{KWlowRowEqEst}) can be obtained with the leading
factor $4$ replaced with some $c(j)$ that depends linearly on the column index $j$. Obviously,
$c(j) \leq C(n)$ for some $C(n)=O(n)$. Similar $O(n)$-dependent bounds may be obtained
(in a little more tedious way) for all remaining elements of the matrices $|X\htbs A-I|$
and $|AX-I|$ if we recall the relations (\ref{RatiosSymm}).

In order to continue estimating the residual errors, we need the following,
quite easy to prove lemma.

\begin{lemma} \label{LmSumNorm}
Let two matrices $F\in\mathbb{R}^{n\times n}$, $G\in\mathbb{R}^{n\times n}$ satisfy
\begin{equation*}
  |f_{ij}|\hts \leq\hs |g_{ij}| + |g_{i,j+1}| \qquad (1\leq i\leq n,\,\, 1\leq j\leq n)\hts ,
\end{equation*}
where we assume that $g_{i,n+1}=0$ ($1\leq i\leq n$). Then,
\begin{equation}
  \| F\htts \|_1\hs \leq\hs\hts 2\hts \|G\hts \|_1\hts , \quad
  \| F\htts \|_{\infty}\hs \leq\hs\hts 2\hts \|G\hts \|_{\infty}\hts , \quad
  \| F\htts \|_{F}\hs \leq\hs\hts 2\hts \|G\hts \|_{F}\hts .
\label{SumNormEst}
\end{equation}
\end{lemma}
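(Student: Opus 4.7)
The plan is to treat the three inequalities uniformly by exploiting a single structural fact: all three norms $\|\cdot\|_1$, $\|\cdot\|_\infty$, and $\|\cdot\|_F$ are \emph{entrywise monotone}, i.e.\ if $|a_{ij}|\leq b_{ij}$ for all $i,j$ (with $b_{ij}\geq 0$), then $\|A\|\leq\|B\|$ in any of these three norms. This is immediate from the definitions, since each of the three norms is built from sums (or sums of squares) of the absolute values of the matrix entries.

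With this in hand, I would introduce two auxiliary nonnegative matrices: $G^{(1)}$ with entries $g^{(1)}_{ij}=|g_{ij}|$, and the ``left-shifted'' $G^{(2)}$ with entries $g^{(2)}_{ij}=|g_{i,j+1}|$ for $1\leq j\leq n-1$ and $g^{(2)}_{in}=0$ (consistent with the convention $g_{i,n+1}=0$). The hypothesis then becomes $|f_{ij}|\leq g^{(1)}_{ij}+g^{(2)}_{ij}$ for all $i,j$, and by the monotonicity above,
\begin{equation*}
\|F\|\hs\leq\hs \|G^{(1)}+G^{(2)}\|\hs\leq\hs \|G^{(1)}\|+\|G^{(2)}\|
\end{equation*}
for any of the three norms, using the ordinary triangle inequality in the last step.

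It remains to observe two simple identities/bounds. First, $\|G^{(1)}\|=\|G\|$ in each of the three norms, since replacing entries by their absolute values does not change any of them. Second, $\|G^{(2)}\|\leq\|G\|$ in each norm: for the $1$-norm this is because the column sums of $G^{(2)}$ are precisely the column sums of $|G|$ shifted by one (with a zero column appended), so their maximum cannot exceed $\|G\|_1$; for the $\infty$-norm, each row sum of $G^{(2)}$ equals $\sum_{k=2}^{n}|g_{ik}|$, which is at most the $i$-th row sum of $|G|$; for the Frobenius norm, $\|G^{(2)}\|_F^2=\sum_{i}\sum_{k=2}^{n}|g_{ik}|^2\leq\|G\|_F^2$. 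Combining these two observations with the displayed inequality yields $\|F\|\leq 2\|G\|$ in each of the three norms, which is exactly (\ref{SumNormEst}).

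There is essentially no obstacle here: the argument is a one-line application of monotonicity plus triangle inequality, and the only thing to check is that the column-shift $G^{(2)}$ does not increase the norm, which is transparent for each of the three specific norms. The only minor point of care is the boundary column $g^{(2)}_{in}=0$, which is dictated by the convention $g_{i,n+1}=0$ and is what makes the shift argument clean for the $1$- and $\infty$-norms.
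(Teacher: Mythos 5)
Your proof is correct, and it matches the approach the paper intends: the paper omits an explicit proof (calling the lemma ``quite easy to prove''), but Remark~\ref{Rm2ndNorm} reveals that the key fact is $\|G\|_{\triangle}=\||G|\|_{\triangle}$ for $\triangle\in\{1,\infty,F\}$, which is precisely your entrywise-monotonicity argument combined with the triangle inequality and the observation that the column shift cannot increase any of these three norms.
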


From the above lemma (or its several obvious generalisations),
we immediately obtain that the inverse matrix $X$ computed numerically
using the algorithm (\ref{KWbasic}) satisfies (for simplicity, we restrict
our attention to the $\|\cdot\|_1$ norm only)
\begin{equation*}
  \max\big\{\| X\htbs A - I\hts \|_1,\, \| AX - I\hts \|_1\big\}\hs \leq\hs \eps K(n)\|A\|_1 \|X\|_1 ,
\end{equation*}
where $K(n) = O(n)$. Note that a similar (even sharper) bound for $\|X\htbs A -I\|_1$ holds
in the case of the unstable recursive algorithm based on (\ref{SimplRecAlg}). The important difference
is the relation between $\|X\|_1$ and $\|A^{-1}\|_1$. We are convinced that if the inverse $X$
is computed using (\ref{KWbasic}), then
\begin{equation}
  \| X \|_1\hs \leq \hs \| A^{-1} \|_1 \hts \big( 1 + \eps J(n)\hts \cond{A} \big)
\label{Xnorm}
\end{equation}
for some $J(n) = O(n)$. Possibly, the difference between $\|A^{-1}\|_1$ and $\|X\|_1$ may be larger
than the right hand side of (\ref{Xnorm}), but then $\|X\|_1\htbs<\|A^{-1}\|_1$. Our presumption
is based on the fact that we are computing minimal solutions of the difference equations which
correspond to the equations $AX=I=X\htbs A$ (the recurrences for ratios are carried out in
the stable, towards-the-diagonal direction). In such a case, if we are moving away (along
a row or a column) from the main diagonal, the elements of $X$ should not grow
faster (or decrease slower) in modulus than the elements of $A^{-1}$, which,
we think, implies the inequality (\ref{Xnorm}). However, a formal proof
of that assumption may be difficult.

Assuming that (\ref{Xnorm}) holds, we obtain the following:

\begin{conjecture} \label{CjKWerr}
If $A\in\mathbb{R}^{n\times n}$ is a non-singular matrix whose inverse has only non-zero elements
in the lower triangle, and $\htts\eps\hts n\htts \cond{A}\httbs<\htbs C$ for some constant $C$,
then the inverse $X$ computed numerically by the algorithm (\ref{KWbasic}) satisfies
\begin{equation}
  \max\big\{ \| X\htbs A - I\hts \|_1,\htts \| AX - I\hts \|_1\big\}\htts \leq\hs \eps K(n)\hts \cond{A} ,  
\label{MainErrEst}
\end{equation}
for some $K(n)=O(n)$.
\end{conjecture}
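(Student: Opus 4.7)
The plan is to combine the scalar residual estimates already derived in the discussion preceding the conjecture with the (conjectured) norm bound (\ref{Xnorm}), and then invoke the hypothesis $\eps\, n\, \cond{A} < C$ to absorb the resulting correction factor.

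First I would establish an intermediate bound of the form
$$\max\{ \| X\htbs A - I\hts\|_1,\, \| AX - I\hts\|_1 \}\hs \leq\hs \eps K(n)\hts \|A\|_1\, \|X\|_1$$
with $K(n) = O(n)$. The ingredients are in place: for each entry the analysis around (\ref{KWlowRowEqEst}) gives
$$|(X\htbs A - I)_{s,k}|\hs \leq\hs \eps\, c(s,k)\,\bigl(|a_{k-1,k}\, x_{s,k-1}| + |a_{k,k}\, x_{s,k}| + |a_{k+1,k}\, x_{s,k+1}|\bigr),$$
with $c(s,k) \leq C(n) = O(n)$, and the paper notes that analogous bounds hold for $|AX-I|$ and for the remaining entries once one invokes the ratio symmetry (\ref{RatiosSymm}). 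Summing column-wise and applying Lemma \ref{LmSumNorm} (in a straightforward three-term generalisation) yields the displayed estimate.

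Next, assuming (\ref{Xnorm}), I substitute $\|X\|_1 \leq \|A^{-1}\|_1\bigl(1 + \eps J(n)\, \cond{A}\bigr)$ and use $\|A\|_1\, \|A^{-1}\|_1 = \cond{A}$ to obtain
$$\max\{ \| X\htbs A - I\hts\|_1,\, \| AX - I\hts\|_1 \}\hs \leq\hs \eps K(n)\, \cond{A}\,\bigl(1 + \eps J(n)\, \cond{A}\bigr).$$
The hypothesis $\eps\, n\, \cond{A} < C$ forces $\eps J(n)\, \cond{A}$ to be bounded by a universal constant, so the parenthesized factor can be absorbed into a new $O(n)$ function $\tilde K(n)$, giving (\ref{MainErrEst}).

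The hard part is (\ref{Xnorm}), which the paper itself leaves at the level of heuristic. A rigorous argument would require tracking roundoff propagation through the ratio recurrences (\ref{ColRatiosRec}) and (\ref{RowRatiosRec}), establishing that each computed ratio $\tilde q_k$ satisfies $\tilde q_k = q_k\bigl(1 + O(\eps\, \cond{A})\bigr)$, and then translating those ratio perturbations into entrywise bounds on $|x_{s,j}| - |(A^{-1})_{s,j}|$ via the telescoping products $x_{s,j} = x_{s,k}\hts q_k\hts q_{k-1}\cdots q_{j+1}$. The delicate point is the denominator $a_{k,k} + a_{k-1,k}\hts q_k$ appearing in (\ref{ColRatiosRec}): a priori, ill-conditioning of $A$ could trigger catastrophic cancellation there, and one must exploit the minimal-solution character of the sequence $\{q_k\}$ — the same feature underlying Corollary \ref{WnTowardsDiag} — to rule this out. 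I expect that a careful componentwise perturbation analysis of the Thomas-style recurrence for $q_k$, combined with the ratio representation of the entries of $X$, will convert the heuristic into a proof of (\ref{Xnorm}), and hence of the conjecture.
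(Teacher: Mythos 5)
Your argument follows the paper's own justification essentially verbatim: the per-entry residual bounds around (\ref{KWlowRowEqEst}) together with Lemma \ref{LmSumNorm} give $\eps K(n)\|A\|_1\|X\|_1$, and the unproven norm estimate (\ref{Xnorm}) plus the hypothesis $\eps\, n\,\cond{A}<C$ convert this to (\ref{MainErrEst}). You also correctly identify that (\ref{Xnorm}) is exactly the gap the authors themselves leave open (which is why the statement remains a conjecture rather than a theorem), so your proposal matches the paper's reasoning and its acknowledged limitation.
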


\begin{remark} \label{RmLewisErr}
{\rm In the case of the algorithm of Lewis (c.f.\ \cite{Lewis82} or Theorem \ref{TwLewis}),
bounds similar to (\ref{KWlowRowEqEst}) can be derived for all non-diagonal elements of the residual matrices
$AX-I$ and $X\htbs A-I$. For the diagonal elements, only $O(n)$-dependent bounds hold. This
implies that the numerical properties of the algorithm (\ref{AlgLewis(1)})--(\ref{AlgLewis(2)})
and the new algorithm based on (\ref{KWbasic}) are comparable. Recall that if the Lewis
algorithm is used together with (\ref{BlockInv}), then moduli of some elements of
the residual matrices may be almost as large  as $\eps\hts\cond{A}^2$.}
\end{remark}

\begin{remark} \label{Rm2ndNorm}
{\rm The important step in justifying Conjecture \ref{CjKWerr} is Lemma \ref{LmSumNorm} which is not
true in the case of the second matrix norm in general. The assertion of Lemma \ref{LmSumNorm} is
an immediate consequence of the fact that $\|G\|=\||G|\|_{\triangle}$ if $\triangle\in\{1,\infty,F\}$.
In the case of the $\|\cdot\|_2$ norm, taking the moduli of all elements of a matrix may
increase the norm by a factor proportional to $\sqrt{n}$ ($n$ is the matrix size). However,
if $G=A$, where $A$ is a tridiagonal matrix, or $G=A^{-1}$, then $\|G\|\leq 2\||G|\|_2$, and
the inequality similar to (\ref{SumNormEst}) is satisfied with the factor $2$ replaced by $4$.
Consequently, we suspect the error estimation (\ref{MainErrEst}) to be also true in the case
of the second matrix norm.}
\end{remark}

\subsection{The extended ratio-based method}

The last thing to do is to extend the scheme given in (\ref{KWbasic}) so that it
can be applied to an arbitrary non-singular tridiagonal matrix. Obviously, unlike, e.g.,
(\ref{BlockInv}), the extension should preserve the very favourable numerical properties of
the formulae (\ref{KWbasic}), and also should not increase the computational complexity. To achieve
the goal, several conditions need to be fulfilled. The computations may include only one initial element,
all other elements of the inverse matrix $X$ should be computed from another element (adjacent if possible)
by only one multiplication (or division), $x_{ij} = c_{ij}x_{i-q,j-r}$, in such a way that each relation
between two elements is a direct consequence of the equations $AX\htbs=\htbs I\htbs=\htbs X\htbs A$.
Of course, the number of additional arithmetic operations should depend linearly on the
matrix size $n$.

In order to do so, we need a complete characterisation of possible
shapes of blocks of zeros in the inverse matrix $X$.

\begin{theorem} \label{TwZeroBlocks}
Let $A\in\mathbb{R}^{n\times n}$ be a non-singular tridiagonal matrix, and $X=A^{-1}$.\\[0.5ex]
{\rm\textbf{a)}} If $\hts a_{k+1,k}=0$,
then $x_{ij} = 0$ for all $\hts k+1 \leq i \leq n$ and $1\leq j\leq k$ ($1\leq k \leq n-1$).\\[0.5ex]
{\rm\textbf{b)}} If $\hts a_{k-1,k}=0$,
then $x_{ij} = 0$ for all $\hts 1 \leq i \leq k-1$ and $k\leq j\leq n$ ($2\leq k \leq n$).\\[0.5ex]
{\rm\textbf{c)}} If $\hts |q_{k}|=\infty$ or, equivalently, $|\hat{r}_{k}|=\infty$, then
$\hts x_{s,k} = 0$ for $\hts k \leq s \leq n$, and $\hts x_{k,j} = 0$ for $\hts k \leq j \leq n$
($2\leq k \leq n$).\\[0.5ex]
{\rm\textbf{d)}} If $\hts |r_{k}|=\infty$ or, equivalently, $|\hat{q}_{k}|=\infty$, then
$\hts x_{k,j} = 0$ for $\hts 1 \leq j \leq k$, and $\hts x_{s,k} = 0$ for $\hts 1 \leq s \leq k$
($1\leq k \leq n-1$).\\[0.5ex]
{\rm\textbf{e)}} Each block of zeros in the inverse matrix $X$ may consist only
of the four different types of blocks described above.
\end{theorem}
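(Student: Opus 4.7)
The plan is to reduce every assertion of the theorem to the classical (Usmani) closed-form expression for the entries of the inverse of a tridiagonal matrix. Let $\theta_j$ denote the leading principal $j\times j$ minor of $A$ (with $\theta_0=1$) and $\phi_j$ the trailing principal minor spanned by rows and columns $j,\ldots,n$ (with $\phi_{n+1}=1$). A routine cofactor expansion yields, for $i\leq j$,
\[
  x_{i,j}\,=\,\frac{(-1)^{i+j}\, a_{i,i+1}\hts a_{i+1,i+2}\cdots a_{j-1,j}\,\theta_{i-1}\,\phi_{j+1}}{\det A}\hts,
\]
together with a mirror identity involving the sub-diagonal product $a_{j+1,j}\cdots a_{i,i-1}$ for $i>j$. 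All five statements will be read off from this single identity, combined with the standard minor recurrences $\theta_k=a_{k,k}\theta_{k-1}-a_{k-1,k}\hts a_{k,k-1}\theta_{k-2}$ and $\phi_k=a_{k,k}\phi_{k+1}-a_{k,k+1}\hts a_{k+1,k}\phi_{k+2}$.

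For (a) and (b), I would invoke the block splitting (\ref{BlockInv}) directly: if $a_{k+1,k}=0$, then $A$ is block upper triangular with diagonal blocks $F\in\mathbb{R}^{k\times k}$ and $G\in\mathbb{R}^{(n-k)\times(n-k)}$, both non-singular since $\det A=\det F\cdot\det G\neq 0$, whence $A^{-1}$ is itself block upper triangular and the entire lower-left $(n-k)\times k$ corner vanishes. Part (b) is the transpose.

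For (c) and (d), the first step is to identify the recursive ratios from (\ref{KWbasic}) with ratios of principal minors. A short induction on the defining recurrences -- collapsing $a_{k,k}+a_{k-1,k}\hts q_k$ into $\theta_k/\theta_{k-1}$ and $a_{k,k}+a_{k,k+1}\hts r_k$ into $\phi_k/\phi_{k+1}$ via the minor recurrences above -- yields
\[
  q_k\,=\,-\frac{a_{k,k-1}\,\theta_{k-2}}{\theta_{k-1}}\hts,\qquad
  r_k\,=\,-\frac{a_{k+1,k}\,\phi_{k+2}}{\phi_{k+1}}\hts.
\]
Hence $|q_k|=\infty$ is equivalent to $\theta_{k-1}=0$, and $|r_k|=\infty$ to $\phi_{k+1}=0$, under the standing assumption that the relevant off-diagonal entries of $A$ are non-zero (otherwise (a) or (b) already applies). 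The twin equivalences $|q_k|=\infty\Leftrightarrow|\hat r_k|=\infty$ and $|r_k|=\infty\Leftrightarrow|\hat q_k|=\infty$ follow immediately from Lemma \ref{LmRatiosSymm}. Substituting $\theta_{k-1}=0$ into the Usmani formula annihilates $x_{k,j}$ for $j\geq k$ and $x_{s,k}$ for $s\geq k$, proving (c); part (d) is symmetric via $\phi_{k+1}=0$.

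For (e), given any vanishing entry $x_{i,j}=0$, the Usmani formula forces at least one of its factors to vanish: a zero super-diagonal $a_{m,m+1}$ with $i\leq m\leq j-1$ places $(i,j)$ in a block of type (b) associated with $k=m+1$; a zero sub-diagonal $a_{m+1,m}$ with $j\leq m\leq i-1$ places it in a block of type (a); a vanishing $\theta_{k-1}$ places it in a block of type (c); a vanishing $\phi_{k+1}$ places it in a block of type (d). The main delicate point, and the one I expect to demand the most care, is the boundary bookkeeping: one must verify that $\theta_\ell=0$ (respectively $\phi_\ell=0$) forces exactly the L-shape asserted in (c) (resp.\ (d)) and nothing outside it, and that simultaneous triggers produce unions of admissible blocks rather than new shapes. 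Once this case analysis is completed, every zero entry of $X$ has been assigned to a block of one of the four listed types, establishing (e).
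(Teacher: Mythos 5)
Your argument is correct, but it proceeds by a genuinely different route than the paper. The paper disposes of this theorem in one line, citing Theorem \ref{TwRank1} (the rank-one structure of the corner blocks $U(k)$, $L(k)$) together with the block-triangular inversion (\ref{BlockInv}): parts (a),(b) come from (\ref{BlockInv}), and the L-shaped zero sets in (c),(d) come from the observation that a vanishing corner entry of a rank-one block forces an entire row or column of that block to vanish. You instead derive everything from the Usmani closed form $x_{i,j}=(-1)^{i+j}\bigl(\prod_{m=i}^{j-1}a_{m,m+1}\bigr)\theta_{i-1}\phi_{j+1}/\det A$ and the minor recurrences, after identifying the ratios of (\ref{KWbasic}) as $q_k=-a_{k,k-1}\theta_{k-2}/\theta_{k-1}$ and $r_k=-a_{k+1,k}\phi_{k+2}/\phi_{k+1}$. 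This costs you external machinery the paper never introduces, but it buys a genuinely sharper handle on part (e): since the numerator of every entry is an explicit product, each zero of $X$ is tagged by a vanishing factor, and each factor type maps onto exactly one of the four block shapes --- a cleaner exhaustive classification than the paper's terse appeal to rank-one structure. Two small points worth making explicit in your ``boundary bookkeeping'': first, for a non-singular tridiagonal matrix two consecutive leading (or trailing) principal minors cannot both vanish (the three-term minor recurrence would then kill $\det A$), which is what makes ``$|q_k|=\infty\Leftrightarrow\theta_{k-1}=0$'' clean; second, the potential $0/0$ in $q_k=-a_{k,k-1}\theta_{k-2}/\theta_{k-1}$ never occurs, because $a_{k,k-1}=0$ forces $\theta_{k-1}=\det F\neq 0$ by the block splitting, so your standing assumption is harmless. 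The equivalences with $\hat r_k$, $\hat q_k$ via Lemma \ref{LmRatiosSymm} are handled correctly.
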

\begin{proof}
The above assertions are a direct consequence of Theorem \ref{TwRank1} and (\ref{BlockInv}).
\end{proof}

Before we proceed, one more problem, related to the computation of ratios, has to be solved.
Observe that if $a_{k,k-1}=0$, then $q_{k}=0$. In this case, the relation (\ref{RatiosSymm})
remains true, but it cannot be used to compute the ratio
$\hat{r}_k = a_{k,k-1}^{-1}a_{k-1,k}\hts q_{k}$.
The following lemma delivers a scheme that allows to compute all the
ratios in every case, without sacrificing the relation (\ref{RatiosSymm}).

\begin{lemma} \label{LmRatiosSmart}
{\bf a)} The ratios $q_k = x_{j,k}^{-1}\hts x_{j,k-1}$ and $\hat{r}_k = x_{k,j}^{-1}\hts x_{k-1,j}$
($1<k\leq n$, $j\geq k$) satisfy\vspace{-0.25ex}
\begin{equation}
\renewcommand{\arraystretch}{1.4}
  \hspace*{-1.5ex}\left.
  \begin{array}{l}\vspace{-4.5ex}\\
    q_{2} = -a_{1,1}^{-1}\hts a_{2,1}\hts, \qquad q_{k+1} = -s_k^{-1}\hts a_{k+1,k}\hts,\\
    \hat{r}_{2} = -a_{1,1}^{-1}\hts a_{1,2}\hts, \qquad \hat{r}_{k+1} = -s_k^{-1}\hts a_{k,k+1}\hts,
  \end{array}\quad\right\} \quad (1 < k < n)
\label{RatiosSmartQLRU}
\vspace{-1ex}
\end{equation}
where\vspace{-1ex}
\begin{equation}
  s_{k} = a_{k,k}+a_{k-1,k}\hts q_{k} \qquad (1 < k < n)\hts.
\label{sQLRU}
\vspace{1.5ex}
\end{equation}
{\bf b)} The ratios $\hat{q}_k = x_{j,k}^{-1}\hts x_{j,k+1}$ and $r_k = x_{k,j}^{-1}\hts x_{k+1,j}$
($1\leq k< n,\,\,j\leq k$) satisfy\vspace{0.125ex}
\begin{equation}
\renewcommand{\arraystretch}{1.4}
  \hspace*{-1.5ex}\left.
  \begin{array}{l}\vspace{-4.5ex}\\
    \hat{q}_{n-1} = -a_{n,n}^{-1}\hts a_{n-1,n}\hts, \qquad \hat{q}_{k-1} = -t_k^{-1}\hts a_{k-1,k}\hts,\\
    r_{n-1} = -a_{n,n}^{-1}\hts a_{n,n-1}\hts, \qquad r_{k-1} = -t_k^{-1}\hts a_{k,k-1}\hts,
  \end{array}\quad\right\} \quad (n > k > 1)
\label{RatiosSmartQURL}
\vspace{-1ex}
\end{equation}
where\vspace{-1ex}
\begin{equation}
  t_{k} = a_{k,k}+a_{k,k+1}\hts r_k \qquad (2 \leq k < n)\hts.
\label{tQURL}
\vspace{1ex}
\end{equation}
\end{lemma}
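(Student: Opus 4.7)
The plan is to derive each of the two scalar recursions directly from the matrix equations $X\httbs A = I$ and $AX = I$, and then invoke Lewis's symmetry (Lemma~\ref{LmLewisSymm}) to show that the two denominators that appear in these recursions coincide, so that both ratios can be generated from a single auxiliary quantity $s_k$. Concretely, for part (a) I would first observe that the entry $(XA)_{j,k} = 0$ for $j > k$ reads
\begin{equation*}
  a_{k-1,k}\hts x_{j,k-1} + a_{k,k}\hts x_{j,k} + a_{k+1,k}\hts x_{j,k+1} = 0 ;
\end{equation*}
dividing by $x_{j,k}$ yields $q_{k+1} = -a_{k+1,k}/(a_{k,k} + a_{k-1,k}\hts q_k)$. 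The analogous manipulation of $(AX)_{k,j} = 0$ for $j > k$ gives $\hat{r}_{k+1} = -a_{k,k+1}/(a_{k,k} + a_{k,k-1}\hts\hat{r}_k)$. The initial values $q_2 = -a_{2,1}/a_{1,1}$ and $\hat{r}_2 = -a_{1,2}/a_{1,1}$ come from the single off-diagonal relations $a_{1,1} x_{j,1} + a_{2,1} x_{j,2} = 0$ and $a_{1,1} x_{1,j} + a_{1,2} x_{2,j} = 0$ respectively.

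The second step is to identify the two denominators. Setting $j = 1$ and replacing $k$ by $k-1$ in (\ref{LewisSymm}) yields $a_{k-1,k}\hts x_{k,k-1} = a_{k,k-1}\hts x_{k-1,k}$; dividing by $x_{k,k}$ gives $a_{k-1,k}\hts q_k = a_{k,k-1}\hts\hat{r}_k$. Therefore $a_{k,k} + a_{k,k-1}\hts\hat{r}_k = a_{k,k} + a_{k-1,k}\hts q_k = s_k$, and the two recursions collapse into the unified form (\ref{RatiosSmartQLRU})--(\ref{sQLRU}). Part~(b) is the mirror image: one reads the same two matrix equations starting from row/column $n$ and decreasing, and invokes (\ref{LewisSymm}) in the form $a_{k+1,k}\hts\hat{q}_k = a_{k,k+1}\hts r_k$ to obtain the common denominator $t_k$.

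The one subtle point, which I expect to be the real obstacle, is that the Lewis symmetry identity used in the second step requires $a_{k-1,k}$ or $a_{k,k-1}$ to be non-zero, whereas the lemma is supposed to be valid in general. This is resolved by Theorem~\ref{TwZeroBlocks}: if $a_{k,k-1} = 0$, then part~(a) of that theorem (applied with index $k-1$) forces $x_{j,k-1} = 0$ for all $j \geq k$, hence $q_k = 0$, and both candidate denominators reduce to $a_{k,k}$, so the unified formula $\hat{r}_{k+1} = -a_{k,k+1}/s_k$ still delivers the correct ratio (and likewise if $a_{k-1,k} = 0$, then $\hat{r}_k = 0$ by part~(b) of the theorem). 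This is precisely the feature that makes the lemma useful: by routing the computation of $\hat{r}_{k+1}$ through $q_k$ rather than $\hat{r}_k$, one avoids ever having to divide by a potentially vanishing sub- or super-diagonal entry, while still respecting every scalar equation implied by $AX = I = X\httbs A$. Non-vanishing of $s_k$ (and $t_k$) in the generic case follows from the non-singularity of $A$, since $s_k$ is, up to sign, the denominator defining $x_{k,k}$ (cf.\ the third line of (\ref{KWbasic})).
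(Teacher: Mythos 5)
Your proof is correct and follows essentially the same route as the paper's: the paper's (one-sentence) proof likewise obtains the formulae from the defining recurrences for the ratios together with the symmetry relation (\ref{RatiosSymm}), handling the degenerate cases by the conventions $c/0=\pm\infty$, $c/\infty=0$ rather than by your explicit appeal to Theorem \ref{TwZeroBlocks}. The only imprecision is your closing remark that $s_k$ is, up to sign, the denominator defining $x_{k,k}$ --- that holds only for $k=n$ (cf.\ (\ref{KWbasic})) --- but nothing in the lemma requires $s_k\neq 0$, since $s_k=0$ is exactly the case $|q_{k+1}|=\infty$ covered by those conventions.
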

\begin{proof}
The formulae (\ref{RatiosSmartQLRU}) and (\ref{RatiosSmartQURL}) follow immediately from the
definition of the ratios and the relation (\ref{RatiosSymm}) if we additionally assume the standard
convention that $c/0 = \infty$ for $c>0$, $c/0 = -\infty$ for $c<0$, and $c/\infty = 0$ for $|c|<\infty$.
\end{proof}

\begin{remark}
{\rm Note that the values $s_k$ and $t_k$ do not have to be remembered, i.e.\ in practical
implementation, can be replaced by a single variable.}
\end{remark}

\begin{remark}
{\rm By (\ref{RatiosSymm}), the last terms in (\ref{sQLRU}) and (\ref{tQURL}) can be replaced with
$a_{k,k-1}\hts \hat{r}_{k}$ and $a_{k+1,k}\hts \hat{q}_k$, respectively. The method proposed in
this paper has slightly better numerical properties if the ''leading'' ratios --- the ones that appear
in (\ref{sQLRU}) and (\ref{tQURL}) --- belong to the same triangle, i.e. if the leading pairs
are: $\{q_k\}$ and $\{r_k\}$, or\hts: $\{\hat{q}_k\}$ and $\{\hat{r}_k\}$.}
\end{remark}

Now, we may formulate the main result of this Section.

\begin{theorem} \label{TwKWext}
Let $A\in\mathbb{R}^{n\times n}$ be a non-singular tridiagonal matrix, and $X=A^{-1}$.
Let us assume that the ratios $q_{k}$, $\hat{r}_k$ ($2\leq k\leq n$), and $\hat{q}_k$, $r_{k}$
($1\leq k\leq n-1$) are given by (\ref{RatiosSmartQLRU}) and (\ref{RatiosSmartQURL}).\\[0.5ex]
{\rm\textbf{a)}} For $2\leq k\leq n$, if $|q_{k}| < \infty$, then\vspace{-0.5ex}
\begin{equation}
  x_{s,k-1} = q_{k}\hts x_{s,k}  \quad (k \leq s\leq n)\hts ,
\label{KWextLower}
\vspace{-0.5ex}
\end{equation}
otherwise, for \hts $k < s\leq n$, we have\vspace{-0.5ex}
\begin{equation}
  x_{s,k-1}\htts = \left\{
  \renewcommand{\arraystretch}{1.4}
  \begin{array}{ll}\vspace*{-4.125ex}\\
    \htbs
    a_{n-1,n}^{-1}
      &\,\mathrm{if}\quad\httbs k = n\hts, \\
    \htbs
    -a_{k+1,k} a_{k-1,k}^{-1}\hts x_{s,k+1}
      &\,\mathrm{if}\quad\httbs k < n\hts .
  \end{array}\right.
\label{KWextLowerQinf}
\end{equation}
In addition, if $|q_{k}| = \infty$, then for $2\leq k < n$,
\begin{equation}
  x_{k,k-1}\htts = \left\{
  \renewcommand{\arraystretch}{1.4}
  \begin{array}{ll}\vspace*{-4.33ex}\\
    \htbs
    r_{k}^{-1} x_{k+1,k-1}
      &\,\mathrm{if}\quad\httbs r_{k}\neq 0 \hts, \\
    \htbs
    -a_{k+1,k+2} a_{k+1,k}^{-1}\hts x_{k+2,k-1}
      &\,\mathrm{if}\quad\httbs r_{k} = 0,\, a_{k+1,k}\neq 0 \hts, \\
    \htbs
    -a_{k,k+1} a_{k-1,k}^{-1}\hts \hat{q}_{k}^{\htts-1} x_{k+1,k+1}
      &\,\mathrm{if}\quad\httbs r_{k} = 0,\, a_{k+1,k} = 0,\, \hat{q}_{k}\neq 0 \hts, \\
    \htbs
    a_{k+1,k+2} a_{k-1,k}^{-1}\hts x_{k+2,k+1}
      &\,\mathrm{if}\quad\httbs r_{k} = 0,\, a_{k+1,k} = 0,\, \hat{q}_{k} = 0,\, a_{k,k+1}\neq 0 \hts, \\
    \htbs
    a_{k-1,k}^{-1}
      &\,\,\mathrm{otherwise}\hts .
  \end{array}\right.
\label{KWextLowerMissing}
\end{equation}
{\rm\textbf{b)}} For $2\leq k\leq n$, the diagonal element $\hts x_{k-1,k-1}$ satisfies
\begin{equation}
  x_{k-1,k-1}\htts = \left\{
  \renewcommand{\arraystretch}{1.4}
  \begin{array}{ll}\vspace*{-4.33ex}\\
    \htbs
    r_{k-1}^{-1} x_{k,k-1}
      &\,\mathrm{if}\quad\httbs r_{k-1}\neq 0\hts , \\
    \htbs
    \hat{r}_k\hts\hat{q}_{k-1}^{\htts-1}x_{k,k}
      &\,\mathrm{if}\quad\httbs r_{k-1} = 0,\, \hat{q}_{k-1}\neq 0\hts , \\
    \htbs
    -a_{k,k+1} a_{k,k-1}^{-1} x_{k+1,k-1}
      &\,\mathrm{if}\quad\httbs r_{k-1} = 0,\, \hat{q}_{k-1} = 0,\, a_{k,k-1}\neq 0\hts , \\
    \htbs
    -a_{k,k+1} a_{k-1,k}^{-1}\hts \hat{r}_k\hts x_{k+1,k}
      &\,\mathrm{if}\quad\httbs r_{k-1} = 0,\, \hat{q}_{k-1} = 0,\, a_{k,k-1} = 0,\, a_{k-1,k}\neq 0\hts , \\
    \htbs
    \big(a_{k-2,k-1} q_{k-1} + a_{k-1,k-1}\big)^{-1}
      &\,\mathrm{otherwise}\hts .
  \end{array}\right.
\label{KWextDiag}
\end{equation}
{\rm\textbf{c)}} For $1\leq k\leq n-1$, if $|\hat{q}_{k}| < \infty$, then\vspace{-0.5ex}
\begin{equation}
  x_{s,k+1} = \hat{q}_{k}\hts x_{s,k}  \quad (1 \leq s\leq k)\hts ,
\label{KWextUpper}
\vspace{-0.5ex}
\end{equation}
otherwise, for \hts $1 \leq s < k$ we have\vspace{-0.5ex}
\begin{equation}
  x_{s,k+1}\htts = \left\{
  \renewcommand{\arraystretch}{1.4}
  \begin{array}{ll}\vspace*{-4.125ex}\\
    \htbs
    a_{1,2} a_{2,1}^{-1} x_{2,1}
      &\,\mathrm{if}\quad\httbs k = 1\hts, \\
    \htbs
    -a_{k-1,k} a_{k+1,k}^{-1}\hts x_{s,k-1}
      &\,\mathrm{if}\quad\httbs k > 1\hts .
  \end{array}\right.
\label{KWextUpperQinf}
\end{equation}
In addition, if $|\hat{q}_{k}| = \infty$, then for $1 < k\leq n-1$\vspace{0.5ex}
\begin{equation}
  x_{k,k+1}\htts = \left\{
  \renewcommand{\arraystretch}{1.4}
  \begin{array}{ll}\vspace*{-4.33ex}\\
    \htbs
    \hat{r}_{k}^{\htts-1} x_{k-1,k+1}
      &\,\mathrm{if}\quad\httbs \hat{r}_{k}\neq 0 \hts, \\
    \htbs
    -a_{k-1,k-2} a_{k-1,k}^{-1}\hts x_{k-2,k+1}
      &\,\mathrm{if}\quad\httbs \hat{r}_{k} = 0,\, a_{k-1,k}\neq 0 \hts, \\
    \htbs
    -a_{k,k-1} a_{k+1,k}^{-1}\hts q_{k}^{-1}\hts x_{k-1,k-1}
      &\,\mathrm{if}\quad\httbs \hat{r}_{k} = 0,\, a_{k-1,k} = 0,\, q_{k}\neq 0 \hts, \\
    \htbs
    a_{k-1,k-2} a_{k+1,k}^{-1}\hts x_{k-2,k-1}
      &\,\mathrm{if}\quad\httbs \hat{r}_{k} = 0,\, a_{k-1,k} = 0,\, q_{k} = 0,\, a_{k,k-1}\neq 0 \hts, \\
    \htbs
    a_{k,k+1} a_{k+1,k}^{-1}\hts x_{k+1,k}
      &\,\mathrm{otherwise}\hts .
  \end{array}\right.
\label{KWextUpperMissing}
\vspace{-0.5ex}
\end{equation}
\end{theorem}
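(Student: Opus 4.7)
The plan is to verify each formula by combining three ingredients: the ratio definitions from Lemma \ref{LmRatiosSmart}, the scalar recurrences (\ref{XAscalarRec})--(\ref{AXscalarRec}) that follow from $XA = I$ and $AX = I$, and the description of the zero blocks of $X$ given by Theorem \ref{TwZeroBlocks}. The ``default'' branches require almost nothing: formula (\ref{KWextLower}) is just the definition $q_{k} = x_{s,k-1}/x_{s,k}$ solved for $x_{s,k-1}$ in the regime where $|q_{k}|<\infty$; formula (\ref{KWextUpper}) is its symmetric twin; and the ``otherwise'' line of (\ref{KWextDiag}) follows from the $(k-1,k-1)$ scalar equation of $XA=I$, namely $a_{k-2,k-1}x_{k-1,k-2}+a_{k-1,k-1}x_{k-1,k-1}+a_{k,k-1}x_{k-1,k}=1$, by dividing through by $x_{k-1,k-1}$ and substituting $q_{k-1}=x_{k-1,k-2}/x_{k-1,k-1}$ (with the two off-diagonal terms dropping out in the degenerate sub-cases).

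For the exceptional lines in part (a), I would first invoke Theorem \ref{TwZeroBlocks}(c): $|q_{k}|=\infty$ forces $x_{s,k}=0$ for $k\le s\le n$. Substituting this into (\ref{XAscalarRec}) with $s>k$ leaves $a_{k-1,k}x_{s,k-1}+a_{k+1,k}x_{s,k+1}=0$, which gives (\ref{KWextLowerQinf}) provided $a_{k-1,k}\ne 0$; but if $a_{k-1,k}$ were also zero, then Theorem \ref{TwZeroBlocks}(b) combined with (c) would force the entire $k$-th column of $X$ to vanish, contradicting non-singularity of $A$. The boundary case $k=n$ comes from the $(n,n)$-entry of $XA=I$ reduced via $x_{n,n}=0$ to $a_{n-1,n}x_{n,n-1}=1$. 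Part (c), covering the exceptional formulas (\ref{KWextUpperQinf}), is then obtained by the entirely symmetric argument using (\ref{AXscalarRec}) and Theorem \ref{TwZeroBlocks}(d).

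The bulk of the work, and the main obstacle, is the cascade of sub-cases in (\ref{KWextLowerMissing}), (\ref{KWextDiag}) and (\ref{KWextUpperMissing}), needed because the element picked by a naive replacement might itself lie in a zero block. My plan is to walk down each list in priority order: first try to express the unknown element as (ratio)$\,\cdot\,$(neighbour) with a ratio already computed; each successive failure corresponds either to a ratio taking an exceptional value or to a sub-/super-diagonal entry of $A$ being zero, and these conditions, via Theorem \ref{TwZeroBlocks} together with Lemma \ref{LmRatiosSymm}, pinpoint the next neighbour of $X$ that is guaranteed to be nonzero. The formula for each sub-case is then read off from whichever scalar equation (\ref{XAscalarRec}) or (\ref{AXscalarRec}) has accumulated enough vanishing terms to isolate the desired entry. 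The final ``otherwise'' branch always corresponds to two adjacent off-diagonal entries of $A$ being zero, which by Theorem \ref{TwZeroBlocks}(a)--(b) decouples a $1\times 1$ block whose inverse is the scalar $a_{k-1,k}^{-1}$ or $(a_{k-2,k-1}q_{k-1}+a_{k-1,k-1})^{-1}$ stated in the theorem. The delicate part is showing exhaustiveness — that every admissible configuration of vanishing $q_{k}, r_{k}, \hat{q}_{k}, \hat{r}_{k}$ and $a_{ij}$ fits into exactly one listed branch — which I would verify by a small case analysis based on (\ref{RatiosSymm}) and on the observations that $s_{k}=0$ is equivalent to $|q_{k+1}|=\infty$ whenever $a_{k+1,k}\ne 0$, and analogously $t_{k}=0$ is equivalent to $|r_{k-1}|=\infty$ whenever $a_{k,k-1}\ne 0$.
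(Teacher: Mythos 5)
Your plan coincides with the paper's own proof in both ingredients and structure: the paper likewise derives every formula from the scalar equations of $AX=I=X\hts A$, the symmetry relations (\ref{LewisSymm})/(\ref{RatiosSymm}), and the zero-block classification of Theorem \ref{TwZeroBlocks}, and it too only carries out one representative sub-case (the fourth line of (\ref{KWextLowerMissing})) in detail, leaving the remaining branches to the analogous case analysis you describe. The one phrase worth tightening is your description of the final ``otherwise'' branches as inverting a $1\times1$ block --- the conditions there decouple $A$ into two diagonal blocks of arbitrary size, and the stated values $a_{k-1,k}^{-1}$ and $(a_{k-2,k-1}q_{k-1}+a_{k-1,k-1})^{-1}$ come from the diagonal ($=1$) equations of $X\hts A=I$ with the vanishing terms removed, i.e.\ they are new starting elements for the decoupled block, exactly as the paper notes via (\ref{BlockDiagInv}).
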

\begin{proof}
The formulae (\ref{KWextLower})--(\ref{KWextUpperMissing}) result from the equations
$AX\httbs=\htbs I\htbs=\htbs X\htbs A$,
the relations (\ref{LewisSymm}) and (\ref{RatiosSymm}), and from Theorem \ref{TwZeroBlocks}.
The complete proof is not very difficult, but is quite long. Therefore, we shall
justify only the one before last formula in (\ref{KWextLowerMissing}),
which --- we think --- is the most difficult one to prove.

From Theorem \ref{TwZeroBlocks}, we conclude that
if $|q_k|=\infty$ and $\hat{q}_k=a_{k+1,k}=0$, then $x_{k+1,k+1}=0$
and $x_{ij}=0$ for $k+1\leq i\leq n$, $1\leq j\leq n$. This implies that the closest non-zero
element to $x_{k,k-1}$ in the lower triangle is $x_{k+2,k+1}$, but there is no ''multiplicative
path'' between these two elements that would lead through the lower triangle only. However, in this case,
we also (by Theorem \ref{TwZeroBlocks}) know that $x_{k,k+2} = 0$, $x_{k-1,k+1} = 0$, and that $x_{k+1,k+2}\neq 0$,
$x_{k-1,k+2}\neq 0$, and $x_{k-1,k}\neq 0$, as $A$ is non-singular and $a_{k,k+1}\neq 0$.
Consequently, from the equations $AX=I$ and $XA=I$, and from (\ref{LewisSymm}),
we have\vspace{-1.75ex}
\begin{equation*}
\renewcommand{\arraystretch}{2.5}
\begin{array}{r@{\,\,=\,\,}l}
  \ds x_{k,k-1}
 &\ds \frac{a_{k,k-1}}{a_{k-1,k}}\htts x_{k-1,k}
    = \frac{a_{k,k-1}}{a_{k-1,k}}\htts \left( -\frac{a_{k+2,k+1}}{a_{k,k+1}}\htts x_{k-1,k+2} \right)\\
 &\ds - \frac{a_{k,k-1}}{a_{k-1,k}}\htts \frac{a_{k+2,k+1}}{a_{k,k+1}}\htts
    \left( - \frac{a_{k,k+1}}{a_{k,k-1}}\htts x_{k+1,k+2} \right)
    = \frac{a_{k+2,k+1}}{a_{k-1,k}}\htts \left( \frac{a_{k+1,k+2}}{a_{k+2,k+1}}\htts x_{k+2,k+1} \right)\\
 &\ds \frac{a_{k+2,k+1}}{a_{k-1,k}}\htts x_{k+2,k+1}\hts .
\end{array}
\vspace{0.25ex}
\end{equation*}

All other equations of Theorem \ref{TwKWext} can
be proved in an analogous way. What still may need a little more explanation
is that, e.g., the second and fourth equations of (\ref{KWextLowerMissing})
refer to the element $a_{k+1,k+2}$ which does not exist if $k=n-1$. However,
it can be proved (using. e.g., Theorem \ref{TwZeroBlocks}) that if $A$ is a
non-singular matrix, then the conditions required by these two
equations can be satisfied only for $k < n-1$.
\end{proof}

The formulae (\ref{KWextLower})--(\ref{KWextUpperMissing}) may be considered as a detailed
description of the new algorithm which can invert any non-singular tridiagonal matrix $A$
if we add one initial step: $\hts x_{n,n} = (a_{n-1,n}q_n + a_{n,n})^{-1}$. The algorithm is
not as elegant as, e.g, pivoting in the case of Gaussian elimination, but has a very important
feature: has the same complexity as its basic version, i.e.\ $n^2+O(n)$. What is even more
important, the new extended algorithm has the same numerical properties as the
one given by (\ref{KWbasic}). Some doubts may be related to the last formulae
of (\ref{KWextLowerMissing}) and (\ref{KWextDiag}), where we, in fact, use another starting
element. However, these two cases correspond to the situation, where $A$ is
a block diagonal matrix, and so
\begin{equation}
A^{-1}\hs =\hs\hs \begin{pmatrix}
F & 0\hs \\
0 & G\hs
\end{pmatrix}^{-1} =\hs\hs\hs
\begin{pmatrix}
F^{-1} & 0 \\
0 & G^{-1}
\end{pmatrix} .
\label{BlockDiagInv}
\end{equation}
In this particular case only, the matrices $F$ and $G$ can be inverted independently, as
they are in no way related in the equations $AX=I=X\htbs A$. Note that with the proposed
scheme, there is no need for special treatment of cases analogous to (\ref{BlockDiagInv}),
as the initial element for the inverse matrix $F^{-1}$ is computed --- as one could
say --- on the way.

The only limitation of the proposed new method for inverting general tridiagonal
matrices $A\in\mathbb{R}^{n\times n}$ is that it requires $n\geq 2$. Obviously,
if $n=1$, then $X = [a_{1,1}^{-1}\hts]$.


\end{document}